\theoremstyle{plain}
\newtheorem{theorem}{Theorem}[section]
\newtheorem{lemma}[theorem]{Lemma}
\newtheorem{proposition}[theorem]{Proposition}
\theoremstyle{definition}
\numberwithin{equation}{section}
\numberwithin{table}{section}
\renewcommand{\P}{\mathbb P}
\newcommand{\TV}[1]{{\lVert #1 \rVert}_{\normalfont
\text{TV}}}
\newcommand{\N}{\mathbb{N}}
\newcommand{\R}{\mathbb{R}}
\title{Limit profile for the ASEP with one open boundary}
\author{Jimmy He\footnote{\textit{Massachusetts Institute of Technology, United States. E-Mail}: \nolinkurl{jimmyhe@mit.edu} } \ and Dominik Schmid\footnote{\textit{University of Bonn, Germany. E-Mail}: \nolinkurl{d.schmid@uni-bonn.de}}}
\date{\today}
\begin{document}

\maketitle
\begin{abstract} We study the speed of convergence to equilibrium for the asymmetric simple exclusion process (ASEP) on a finite interval with one open boundary. We provide sharp estimates on the total-variation distance from equilibrium and verify that the limit profile undergoes a phase transition from a Gaussian to a KPZ profile.
\end{abstract}

\section{Introduction}

In recent years, understanding the speed of convergence to equilibrium of asymmetric exclusion processes became a central task in the area of mixing times; see for example \cite{BN:CutoffASEP,ES:HighLow,GNS:MixingOpen,LL:CutoffASEP,LL:CutoffWeakly,S:MixingTASEP,SS:TASEPcircle}. We are interested in a more refined description of the distance from the stationary distribution, the limit profile. For asymmetric simple exclusion processes with one open boundary, we show that depending on boundary parameters, the window of convergence changes from a diffusive scaling to a KPZ scaling. \\

More precisely, we consider the asymmetric simple exclusion process (ASEP) on a segment of size $N$ with drift parameter $q \in (0,1)$ and boundary parameters $\alpha,\gamma>0$.
Then the \textbf{ASEP with one open boundary} is a continuous-time Markov chain $(\eta_t)_{t \geq 0}$ with state space $\Omega_{N} := \lbrace 0,1 \rbrace^{N}$ according to the following description. We say that a site $x$ is \textbf{occupied} by a particle if $\eta(x)=1$ and \textbf{vacant} otherwise. 
Each a particle at site $x$ is equipped with independent rate $1$ and rate $q$ Poisson clocks. Whenever the first clock rings, the particle attempts a jump to the right, whenever the second clock rings a jump to the left. However, the jump is performed if and only if the target is a vacant site. In addition, we assign independent rate $\alpha$ and rate $\gamma$ Poisson clocks to at site $1$. Whenever the rate $\alpha$ clock rings and site $1$ is vacant, we place a particle. Similarly, when the rate $\gamma$ clock rings and site $1$ is occupied, we remove the particle from site $1$; see Figure \ref{fig:BDEP} for a visualization. Let us stress that the total number of particles will not be preserved over time. For $\alpha,\gamma>0$, let $\rho\in (0,1)$ with
\begin{equation*}
    \rho=\frac{1}{1+\kappa_+} \qquad \text{where} \qquad
    \kappa_+=\frac{1-q+\gamma-\alpha+\sqrt{(1-q+\gamma-\alpha)^2+4\alpha\gamma}}{2\alpha}
\end{equation*}
denote the \textbf{effective density} created by boundary interaction at site $1$. Moreover, we impose the assumption $\alpha > \gamma$, which automatically is satisfied when $\rho \geq \frac{1}{2}$.\\

 Note that the ASEP with one open boundary has a unique stationary distribution $\mu=\mu^{N,q,\alpha,\gamma}$ on the segment $[N] := \{1,\dots,N\}$. Our main goal is to study the distance from $\mu$ in total variation over time. We let $\TV{ \ \cdot \ }$ denote the \textbf{total-variation distance}, i.e.\ for two probability measures $\nu$ and $\nu^{\prime}$ on $\Omega_N$, we set
\begin{equation}\label{def:TVDistance}
\TV{ \nu - \nu^{\prime} } := \frac{1}{2}\sum_{x \in \Omega_N} |\nu(x)-\nu^{\prime}(x)| = \max_{A \subseteq \Omega_N} \left(\nu(A)-\nu^{\prime}(A)\right) \ .
\end{equation} Consider now the distance
\begin{equation}\label{def:MixingTime}
d_N(t) := \max_{\eta \in \Omega_{N}} \TV{\P\left( \eta_t \in \cdot \ \right | \eta_0 = \eta) - \mu} 
\end{equation} for all $t \geq 0$.
\begin{figure} \label{fig:BDEP}
\centering
\begin{tikzpicture}[scale=1.05]

\def\spiral[#1](#2)(#3:#4:#5){
\pgfmathsetmacro{\domain}{pi*#3/180+#4*2*pi}
\draw [#1,
       shift={(#2)},
       domain=0:\domain,
       variable=\t,
       smooth,
       samples=int(\domain/0.08)] plot ({\t r}: {#5*\t/\domain})
}

\def\particles(#1)(#2){

  \draw[black,thick](-3.9+#1,0.55-0.075+#2) -- (-4.9+#1,0.55-0.075+#2) -- (-4.9+#1,-0.4-0.075+#2) -- (-3.9+#1,-0.4-0.075+#2) -- (-3.9+#1,0.55-0.075+#2);
  
  	\node[shape=circle,scale=0.6,fill=red] (Y1) at (-4.15+#1,0.2-0.075+#2) {};
  	\node[shape=circle,scale=0.6,fill=red] (Y2) at (-4.6+#1,0.35-0.075+#2) {};
  	\node[shape=circle,scale=0.6,fill=red] (Y3) at (-4.2+#1,-0.2-0.075+#2) {};
   	\node[shape=circle,scale=0.6,fill=red] (Y4) at (-4.45+#1,0.05-0.075+#2) {};
  	\node[shape=circle,scale=0.6,fill=red] (Y5) at (-4.65+#1,-0.15-0.075+#2) {}; }

  \def\annhil(#1)(#2){	  \spiral[black,thick](9.0+#1,0.09+#2)(0:3:0.42);
  \draw[black,thick](8.5+#1,0.55+#2) -- (9.5+#1,0.55+#2) -- (9.5+#1,-0.4+#2) -- (8.5+#1,-0.4+#2) -- (8.5+#1,0.55+#2); }

	\node[shape=circle,scale=1.5,draw] (B) at (2.3,0){} ;
	\node[shape=circle,scale=1.5,draw] (C) at (4.6,0) {};
	\node[shape=circle,scale=1.2,fill=red] (CB) at (2.3,0) {};
    \node[shape=circle,scale=1.5,draw] (A) at (0,0){} ;
 	\node[shape=circle,scale=1.5,draw] (D) at (6.9,0){} ; 
 	 	\node[shape=circle,scale=1.5,draw] (Z) at (-2.3,0){} ;
	\node[shape=circle,scale=1.2,fill=red] (YZ) at (-2.3,0) {};
   \node[line width=0pt,shape=circle,scale=1.6] (B2) at (2.3,0){};
	\node[line width=0pt,shape=circle,scale=2.5] (D2) at (6.9,0){};
		\node[line width=0pt,shape=circle,scale=2.5] (Z2) at (-2.3,0){};
		
	\node[line width=0pt,shape=circle,scale=2.5] (X10) at (6.8,0){};
		\node[line width=0pt,shape=circle,scale=2.5] (X11) at (-2.2,0){};	
			\node[line width=0pt,shape=circle,scale=2.5] (X12) at (8.4,0){};
		\node[line width=0pt,shape=circle,scale=2.5] (X13) at (-3.8,0){};

		\draw[thick] (Z) to (A);	
	\draw[thick] (A) to (B);	
		\draw[thick] (B) to (C);	
  \draw[thick] (C) to (D);

\particles(0)(0);


\draw [->,line width=1pt]  (B2) to [bend right,in=135,out=45] (C);
  
   \draw [->,line width=1pt] (B2) to [bend right,in=-135,out=-45] (A);
    \node (text1) at (3.5,1){$1$} ;    
	\node (text2) at (1.1,1){$q$} ;   
	\node (text3) at (-2.3-1,1){$\alpha$}; 
	\node (text5) at (-2.3-1,-0.4){$\gamma$}; 

    \node[scale=0.9] (text1) at (-2.2,-0.7){$1$} ;    
    \node[scale=0.9] (text1) at (6.8,-0.7){$N$} ;   
  	
  \draw [->,line width=1pt] (-3.9,0.475) to [bend right,in=135,out=45] (Z);
   \draw [->,line width=1pt] (Z) to [bend right,in=135,out=45] (-3.9,-0.475); 

	\end{tikzpicture}	
\caption{The ASEP with one open boundary for parameters $q,\alpha$ and $\gamma$.}
 \end{figure}Depending on the effective density $\rho$, we define the function
\begin{equation}\label{def:TimeFunction}
g_{\rho}(c) = g_{\rho}^{N}(c) := \frac{1}{1-q} \begin{cases}
4N  + c 2^{-2/3}N^{1/3} \qquad & \text{if } \rho > \frac{1}{2} \\
4N  + c 2^{-2/3}N^{1/3} \qquad &\text{if } \rho = \frac{1}{2} \\
\frac{1}{\mu}N  + c \mu^{-1/2}\sigma^{-1} N^{1/2}\qquad &\text{if } \rho < \frac{1}{2} , \\
\end{cases}
\end{equation} where we set $\mu=\rho(1-\rho)$ and $\sigma^{2}=\rho(1-\rho)(1-2\rho)$.
In the following, we let $F_{\textup{GOE}},F_{\textup{GSE}}$ and $\Phi$ denote the cumulative distribution function of the Tracy-Widom-$GOE$, Tracy-Widom-$GSE$, and Gaussian distribution, respectively. 

\begin{theorem}\label{thm:Main} Let $q<1$ and $\rho > 0$, as well as $\alpha>\gamma$. Then for all $c \in \R$
\begin{equation}\label{eq:Main}
\lim_{N \rightarrow \infty}d_N( g_{\rho}(c)) = 1-F_{\rho}(c) := \begin{cases}
 1-F_{\textup{GSE}}(c) \qquad &\text{if } \rho > \frac{1}{2} \\
 1-F_{\textup{GOE}}(c) \qquad &\text{if } \rho = \frac{1}{2} \\
 1-\Phi(c) \qquad &\text{if } \rho < \frac{1}{2}. \\
\end{cases} 
\end{equation}
\end{theorem}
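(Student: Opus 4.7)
The plan is to establish matching upper and lower bounds on $d_N(g_\rho(c))$, each converging to $1 - F_\rho(c)$ in each of the three phases. The unifying strategy is to pin down a distinguishing observable $\psi\colon \Omega_N \to \R$ whose law under $\P(\eta_{g_\rho(c)}\in \cdot \mid \eta_0 = \eta_{\star})$, for a worst-case initial condition $\eta_{\star}$, is displaced from its stationary law under $\mu$ in a way governed exactly by $F_\rho$. In the regimes $\rho \geq \frac{1}{2}$ the observable will be (a recentered version of) the height function at a fixed bulk site, equivalently the integrated current through site $1$; in the regime $\rho<\frac{1}{2}$ it will be the number of particles in a macroscopic window.

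For the lower bound, the reverse-triangle inequality $\TV{\nu-\mu}\ge \nu(\psi \in A)-\mu(\psi\in A)$ reduces matters to computing one-dimensional distributional limits. Starting from the empty (or full) initial condition and running until $g_\rho(c)$, the height function should exhibit Tracy-Widom-GSE fluctuations on scale $N^{1/3}$ for $\rho>\frac{1}{2}$ and Tracy-Widom-GOE fluctuations for $\rho=\frac{1}{2}$, by virtue of the recent integrable-probability inputs on open-boundary ASEP. In the low-density phase, on the other hand, the stationary measure restricted to a macroscopic window is close to a Bernoulli product measure of density $\rho$, so a standard CLT together with the identification of the density-front location at time $g_\rho(c)$ yields the Gaussian profile on the diffusive scale $N^{1/2}$.

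For the upper bound, I would use the basic coupling for ASEP with shared Poisson clocks, including the boundary clocks at site $1$, together with the monotonicity of the exclusion dynamics. This reduces $d_N(t)$ to controlling the probability that the chains from the empty and the full configurations have not coalesced by time $g_\rho(c)$. Via a second-class particle decomposition and censoring inequalities, this probability can be re-expressed in terms of the same current/height observable used for the lower bound, and the matching is obtained by invoking the same Tracy-Widom and Gaussian asymptotics. In each phase the cutoff-type sharpness comes from the concentration of the relaxation time at $\frac{4N}{1-q}$ or $\frac{N}{(1-q)\mu}$, while the profile comes from the fluctuations around it.

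The main obstacle will be the upper bound in the two KPZ phases $\rho\ge\frac{1}{2}$, where the coupling argument must be sharpened to the critical $N^{1/3}$ window. This demands precise tightness for the height function of the open-boundary ASEP and the transport of the shock/rarefaction initiated from an extremal configuration across the segment of length $N$; the Tracy-Widom-GSE statistics in particular is a delicate integrable input. A secondary subtle point is to verify that the maximizing initial condition in \eqref{def:MixingTime} is indeed (up to negligible error) one of the extremal configurations, and that the chosen observable $\psi$ saturates the total-variation distance rather than merely lower-bounding it; both issues are typical of limit-profile analyses and should yield to the multi-species/censoring framework already developed in this line of work.
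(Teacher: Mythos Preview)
Your lower bound strategy is essentially the paper's: start from the empty configuration, dominate the particle count by the current $\mathcal{J}^{\N}_t$ of the half-space ASEP, and invoke the Tracy--Widom / Gaussian asymptotics for $\mathcal{J}^{\N}_t$ (Theorem~\ref{thm:CurrentEstimate}) to bound the probability of leaving a set such as $\mathcal{A}_N$ from below. No complaint there.

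The gap is in the upper bound. You correctly identify the target --- re-express the probability that two extremal copies have not coalesced in terms of the same current observable --- but you do not supply a mechanism for doing so, and ``second-class particle decomposition and censoring inequalities'' alone will not deliver the exact coefficient in front of $N^{1/3}$ (or $N^{1/2}$). The paper's mechanism is algebraic: it interprets the colored system as a random walk $W_t$ on the type $BC$ Hecke algebra $\mathcal{H}$ and exploits the anti-involution $\iota(T_w)=T_{w^{-1}}$, together with the invariance of the Mallows elements $\mathcal{M}_{[a,b]}$, to show (Lemma~\ref{lem:ASEPDuality}) that the event $\{\mathcal{C}_t\notin\mathcal{A}_N\}$ for one process equals the event $\{\mathcal{D}_t(x)\neq\infty\text{ for all }x\}$ for a dual process. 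The latter is precisely a current event for a half-space ASEP (Lemma~\ref{lem:DualEstimate}), so the \emph{same} asymptotics that gave the lower bound now gives the upper bound. This color--position symmetry is the Bufetov--Nejjar idea adapted to one open boundary, and it is what collapses the upper bound into a one-point current estimate; your outline does not reach this step, and without it there is no known route to the limit profile in the $\rho\ge \tfrac12$ window.

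A smaller point: you propose height at a ``fixed bulk site'' as the observable. The paper instead tracks the right-most empty site $\mathcal{R}(\eta)$ (through the set $\mathcal{A}_N$), which is what allows the clean reduction of $d_N$ to a hitting time via Lemma~\ref{lem:HittingTimesLemma} and the monotone coupling of Lemma~\ref{lem:DominationLeftmostParticle}. That choice is not incidental; it is tailored so that the Hecke involution turns the distinguishing event into exactly a current event.
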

Let us remark that a similar statement as in Theorem \ref{thm:Main} holds for $q > 1$ using the particle-hole symmetry, i.e.\ $d_N(t)$ remains unchanged when swapping $1$ and $q$ for the jump rates as well as swapping the roles of $\alpha$ and $\gamma$. Also, fixing one of the rates to be $1$ causes no loss of generality, as we may always do so by rescaling time. Finally, our techniques should also be able to allow $\rho$ to vary with $N$ to obtain distributions interpolating between the $GSE$ and $GOE$ distributions.
For $\varepsilon \in (0,1)$, let 
\begin{equation}
t^{N}_{\textup{mix}}(\varepsilon) := \inf\left\{ t \geq 0 \, \colon \, d_N(t) \leq  \varepsilon \right\}
\end{equation} be the $\varepsilon$-mixing time of the ASEP with one open boundary.  As previously shown by Gantert et al.\ in \cite{GNS:MixingOpen},  for  $q<1$,  $t^{N}_{\textup{mix}}(\varepsilon)$ is of order $4N$ for $\rho\geq 1/2$, and order $N\rho^{-1}(1-\rho)^{-1}$ for  $\rho <\frac{1}{2}$. In particular, the leading order does not depend on~$\varepsilon$. This is called the \textbf{cutoff phenomenon}. Here, we give a more precise description of the distance to equilibrium, depending on the parameter $\rho$. Let us stress that the behavior for two open boundaries is fundamentally different, for example the TASEP with two open boundaries, where $q=0$ and particles can enter at the site $1$ and exit at site $N$ of the segment, is known to exhibit several different mixing time regimes, some of them without cutoff. 

\subsection{Related work}

Mixing times are a central object in the study of Markov chains, and we refer to \cite{LPW:markov-mixing} for a general introduction.
In recent years, the cutoff phenomenon for exclusion processes has been intensively studied. We refer to a seminal series by Lacoin \cite{L:CutoffCircle,L:CutoffSEP,L:CycleDiffusiveWindow} for the symmetric simple exclusion process on the line and circle, and more recently results by Gantert et al., Salez, and Tran for the symmetric simple exclusion process with open boundaries \cite{GNS:MixingOpen,S:CutoffExclusion,T:SSEP}. For asymmetric simple exclusion processes on a closed segment, mixing times were first studied in \cite{BBHM:MixingBias}, while cutoff was shown in \cite{LL:CutoffASEP}; see also \cite{LL:CutoffWeakly} for cutoff for the weakly asymmetric simple exclusion process and \cite{ES:HighLow,S:MixingTASEP,SS:TASEPcircle} for mixing times of the totally asymmetric simple exclusion process with two open boundaries. 
In \cite{BN:CutoffASEP}, the limit profile of the ASEP on the segment is established by Bufetov and Nejjar. We will crucially rely on their approach by interpreting the ASEP as a random walk on a Hecke algebra; see also \cite{BB:ColorPosition,B:Hecke, K:Cox}. Beside the results in \cite{BN:CutoffASEP}, there are also results on limit profiles for multispecies exclusion processes \cite{Z:Metropolis} and related results on hitting times in the totally asymmetric case, sometimes referred to as the oriented swap process \cite{BGR:OSP, H:shift}. Limit profiles were also studied for symmetric exclusion and interchange processes \cite{L:CutoffCircle,NO:LimitInterchange}, as well as for several card shuffles and, more generally, reversible Markov chains \cite{N:LimitComparison,NO:LimitReversible,T:LimitProfile}. 
Finally, let us remark that as a key tool, we use current estimates for the half space ASEP established very recently by the first author in \cite{H:Boundary}. 
Such results were previously only available when  $\rho=\frac{1}{2}$ by Barraquand et al.\ in \cite{BBCW:HalfspaceASEP}, and the totally asymmetric simple exclusion process by a connection to Pfaffian Schur processes in \cite{BBCS:Halfspace}.

\subsection{Outline of the paper}

In Section \ref{sec:ASEPInteracting}, we study the ASEP with one open boundary as an interacting particle system. We discuss the canonical coupling and describe its extension to the multi-species ASEP as well as the half space ASEP. Moreover, we recall some recent results on the current of the half space ASEP and express the distance from equilibrium in terms of hitting times. This allows us to establish the lower bound in Theorem \ref{thm:Main}. In Section \ref{sec:ASEPHecke}, we view the multi-species ASEP with one open boundary as a random walk on a type $BC$ Hecke algebra, and discuss the existence of a Mallows element. The upper bound on the distance from stationarity in Theorem \ref{thm:Main} is established in Section \ref{sec:LimitProfileUpperBound} following the strategy introduced by Bufetov and Nejjar for the ASEP on the segment. 

\section{The ASEP as an interacting particle system}\label{sec:ASEPInteracting}

In the following, we collect several facts about the ASEP with one open boundary from an interacting particle system perspective. This includes the canonical coupling to multi-species exclusion processes and the half-line ASEP, current theorems, and hitting times. 

\subsection{Canonical coupling for the ASEP with one open boundary}\label{sec:CanonicalCoupling}

We start by introducing the \textbf{canonical coupling} for the ASEP with one open boundary. Let $\eta,\eta^{\prime}$ be two initial configurations for two ASEPs $(\eta_t)_{t \geq 0}$ and $(\eta^{\prime}_t)_{t \geq 0}$ on $\Omega_N$. We couple the two processes in such a way that we assign independent rate $1$ and rate $q$ Poisson clocks to all edges. Whenever the rate $1$ clock rings at time $t$ for an edge $\{x,x+1\}$ with $x\in [N-1]$, and $\eta_{t_-}(x)=1-\eta_{t_-}(x+1)=1$, then move the particle from $x$ to $x+1$. Otherwise, leave the configuration unchanged. Similarly, we move the particle from $x$ to $x+1$ in $(\eta^{\prime}_t)_{t \geq 0}$ whenever $\eta^{\prime}_{t_-}(x)=1-\eta^{\prime}_{t_-}(x+1)=1$, and leave the configuration unchanged otherwise. For the rate $q$ proceed similarly. In addition, we use the same rate $\alpha$ and rate $\gamma$ Poisson clocks for $(\eta_t)_{t \geq 0}$ and $(\eta^{\prime}_t)_{t \geq 0}$ to determine when a particle attempts to enter, respectively to exit, the segment. 
%
Next, we discuss two straightforward extensions of the canonical coupling. First, we consider the \textbf{multi-species  extension}. 
Let $\bar{\Omega}_N := \{1,2,3,\infty\}^{N}$, and consider the partial ordering introduced by component-wise ordering on $\bar{\Omega}_N$ with respect to the total ordering $1  >_p  2  >_p 3 >_p \infty$. We call a site $x$ in $\zeta \in \bar{\Omega}_N$ occupied by a \textbf{first} (respectively \textbf{second} or \textbf{third}) \textbf{class particle} if $\eta(x)=1$ (respectively $\eta(x)=2$ or $\eta(x)=3$), and a \textbf{hole} if $\eta(x)=\infty$. For the canonical coupling of the multi-species ASEP, whenever a rate $1$ clock rings along an edge $e$, we sort the configuration along $e$ in increasing order according to $>_p$, and for rate $q$ clocks in decreasing order. If the rate $\alpha$ clock at site $1$ rings at time $t$, and $\eta_{t_{-}}(1)=\infty$ ($\eta_{t_{-}}(1)=3$), then place a first class particle (second class particle) at site $1$. Similarly, for the rate $\gamma$ clock and $\eta_{t_{-}}(1)=1$ ($\eta_{t_{-}}(1)=2$), we place a hole (third class particle) at site $1$. Let us remark that we obtain in the same way a \textbf{fully colored ASEP} by suitably assigning to all sites different colors; see Section \ref{sec:ASEPHecke} for a formal definition.  
We make the following observation on the multi-species ASEP.

\begin{lemma}\label{lem:CouplingMultiSpecies}
Consider the projection $(\bar{\zeta}_t)_{t \geq 0}$ of the multi-species $(\zeta_t)_{t \geq 0}$ ASEP where we map first and second class particles to particles, and third class particles and holes to empty sites. Then $(\bar{\zeta}_t)_{t \geq 0}$ has the law of an ASEP with one open boundary.
\end{lemma}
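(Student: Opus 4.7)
The plan is to verify the claim pathwise under the canonical coupling: drive both the multi-species ASEP $(\zeta_t)_{t\geq 0}$ and a reference one-boundary ASEP $(\bar\eta_t)_{t\geq 0}$ with initial condition $\bar\eta_0=\pi(\zeta_0)$ by the same bulk Poisson clocks on edges and the same rate-$\alpha$, rate-$\gamma$ clocks at site $1$, where $\pi:\bar\Omega_N\to\Omega_N$ is the deterministic map sending $\{1,2\}$ to a particle and $\{3,\infty\}$ to an empty site. I would then show that $\pi(\zeta_t)=\bar\eta_t$ for all $t\geq 0$, which identifies the law of the projection with that of the ASEP with one open boundary. The entire proof is a small case check; the only mild issue to verify is that the two classes lumped together on each side of $\pi$ really transform coherently under the dynamics.

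For the bulk step, consider a rate-$1$ clock on an edge $\{x,x+1\}$. The multi-species rule sorts $(\zeta(x),\zeta(x+1))$ in $>_p$-increasing order, which is a non-trivial swap only for the six pairs $(a,b)$ with $a>_p b$. The pairs $(1,2)$ and $(3,\infty)$ project under $\pi$ to configurations with both coordinates equal, so $\pi$ sees no change; the other four pairs $(1,3),(1,\infty),(2,3),(2,\infty)$ all project to the ASEP step $(1,0)\mapsto(0,1)$, i.e.\ a right-jump of a particle from an occupied to an empty site. The rate-$q$ clock is handled by the same analysis with the two sides exchanged, producing the ASEP step $(0,1)\mapsto(1,0)$ on the same four projected configurations and no change on the other two.

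For the boundary, the rate-$\alpha$ clock at site $1$ is non-trivial only for $\zeta(1)\in\{3,\infty\}$, which it sends to $2$ and $1$ respectively; both updates project to the entry move $0\mapsto 1$, while $\zeta(1)\in\{1,2\}$ gives no update and already has $\pi(\zeta)(1)=1$. Dually, the rate-$\gamma$ clock acts only on $\zeta(1)\in\{1,2\}$, sending them to $\infty$ and $3$; both project to the exit move $1\mapsto 0$, and it has no effect on $\zeta(1)\in\{3,\infty\}$, consistent with $\pi(\zeta)(1)=0$. Combining the bulk and boundary verifications shows that every clock ring updates $\pi(\zeta_t)$ exactly as it updates $\bar\eta_t$, so the two processes coincide pathwise under the canonical coupling and in particular in law. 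No substantive obstacle arises; the work is entirely in the bookkeeping across the six bulk pairs and the four boundary cases for each boundary clock.
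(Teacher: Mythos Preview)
Your proof is correct and is precisely the verification the paper alludes to: the paper's own proof just says ``this follows immediately by verifying the marginal transition rates,'' and you have carried out exactly that case check pathwise under the canonical coupling. No difference in approach.
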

\begin{proof}
This is follows immediately by verifying the marginal transition rates.
\end{proof}
For the second extension, consider the \textbf{half space ASEP} $(\xi_t)_{t \geq 0}$. Here, the state space is given by $\Omega := \{ 0,1 \}^{\N}$, particles enter at site $1$ according to rate $\alpha$ Poisson clocks, respectively exit at site $1$ at rate $\gamma$ Poisson clocks. Inside the half open line, particles move to the right (left) according to rate $1$ (rate $q$) Poisson clocks under the exclusion rule. Note that we can couple an ASEP with one open boundary $(\eta_t)_{t \geq 0}$ and a half space ASEP $(\xi_t)_{t \geq 0}$ by using the same clocks on sites $1$ up to $N$, including the clocks for entering and exiting of particles, as well as independent clocks for $(\xi_t)_{t \geq 0}$ at all sites larger than $N$. Again, we refer to this as the canonical coupling. For any $\eta \in \Omega_N$ and $\zeta \in \Omega_M$, allowing $M=\infty$ with $\Omega=\Omega_\infty$ if $\zeta(x)\neq 1$ for finitely many $x\in \N$, we can define a partial ordering by
\begin{equation}
\zeta \succeq \eta \qquad \Leftrightarrow \qquad \sum_{i=x}^{M} (1-\zeta(i)) \geq \sum_{i=x}^{N} (1-\eta(i))  \ \text{ for all } x \in \N . 
\end{equation} 
In particular, note that this ordering allows us to compare exclusion processes on different state spaces. For a configuration $\eta$, let $\mathcal{R}(\eta) = \sup\{ x \in \N \, \colon \, \eta(x)=0 \}$ denote the location of the right-most empty site in $\eta$. A key tool is that the canonical coupling preserves this order.

\begin{lemma}\label{lem:DominationLeftmostParticle} Let $\eta \in \Omega_N$ and $\zeta \in \Omega_M$ such that $\eta \preceq \zeta$. Then under the canonical coupling $\mathbf{P}$, 
\begin{equation}\label{eq:GeneralDomination}
\mathbf{P}\left(  \eta_t \preceq \zeta_t \text{ for all } t \geq 0  \mid \eta_0=\eta \text{ and } \zeta_0=\zeta \right) = 1
\end{equation}
In particular, the condition $\mathcal{R}(\eta)\leq \mathcal{R}(\zeta)< \infty$ is preserved for all time, or in other words, 
\begin{equation}\label{eq:RightmostDomination}
\mathbf{P}\left(  \mathcal{R}(\eta_t) \leq \mathcal{R}(\zeta_t) \text{ for all } t \geq 0  \mid \eta_0=\eta \text{ and } \zeta_0=\zeta \right) = 1 .
\end{equation}
\end{lemma}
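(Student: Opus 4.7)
The plan is to encode the partial order by a height function and verify that it is preserved at each Poisson ring under the canonical coupling. Define $H_\nu(x) := \sum_{i = x}^{|\nu|}(1-\nu(i))$ for any admissible configuration $\nu$, where $|\nu| \in \{N, M\}$ and the sum is empty (hence equal to $0$) whenever $x > |\nu|$. With this notation, $\eta \preceq \zeta$ is equivalent to the pointwise inequality $H_\eta(x) \leq H_\zeta(x)$ for all $x \in \N$. Under the canonical coupling, the driving Poisson processes almost surely produce no simultaneous jumps, and the configurations are constant between firings, so it suffices to verify that at each single clock event the inequality is preserved at every position $x$.

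Each event changes the relevant $H_\nu$ by exactly $\pm 1$ at a single coordinate: a rate-$1$ ring on $(y, y+1)$, which acts iff $\nu(y)=1$ and $\nu(y+1)=0$, decreases $H_\nu(y+1)$ by $1$; a rate-$q$ ring there, acting iff $\nu(y)=0$ and $\nu(y+1)=1$, increases $H_\nu(y+1)$ by $1$; a rate-$\alpha$ ring at site $1$, acting iff $\nu(1)=0$, decreases $H_\nu(1)$ by $1$; and a rate-$\gamma$ ring there, acting iff $\nu(1)=1$, increases $H_\nu(1)$ by $1$. The ordering is at risk only when a shared clock acts in one of the two processes but not the other at a coordinate $x$ where $H_\eta(x) = H_\zeta(x)$. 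For each of the four clock types the analysis is the same elementary monotonicity step: tightness at the affected coordinate $x$, combined with the assumed inequality at the neighboring coordinate, forces the local occupation patterns of the two configurations to agree at $(y,y+1)$ or at site $1$, so that either both processes fire or neither does. For instance, if a rate-$1$ clock rings on $(y, y+1)$ with $\zeta(y)=1$, $\zeta(y+1)=0$ and we are in the equality case $H_\eta(y+1) = H_\zeta(y+1)$, then $H_\eta(y+2) \leq H_\zeta(y+2) = H_\zeta(y+1) - 1 = H_\eta(y+1) - 1$ forces $\eta(y+1) = 0$, and then $H_\zeta(y) = H_\zeta(y+1) = H_\eta(y+1) \geq H_\eta(y) = (1-\eta(y)) + H_\eta(y+1)$ forces $\eta(y) = 1$, so $\eta$ performs the same jump.

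Clocks on edges $(y, y+1)$ with $y \geq N$, which in the case $M > N$ exist only for $\zeta$, leave $H_\eta$ untouched and can only affect the inequality at position $x = y+1 > N$, where $H_\eta(x) = 0$ and the condition is automatic. Assembling these cases yields \eqref{eq:GeneralDomination}. For \eqref{eq:RightmostDomination}, only the boundary clocks can change the total number of holes in $\zeta_t$ and they ring only finitely often on any finite time interval, so $\mathcal{R}(\zeta_t) < \infty$ for all $t$; the preserved order together with $H_{\zeta_t}(\mathcal{R}(\zeta_t)+1) = 0$ then forces $\mathcal{R}(\eta_t) \leq \mathcal{R}(\zeta_t)$. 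The main obstacle is purely bookkeeping: one has to handle the four clock types together with the \emph{excess} clocks in the case $M > N$, but each individual case collapses to a trivial inequality once the height-function reformulation is in place.
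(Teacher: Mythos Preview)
Your proof is correct. The paper takes a different route: for two processes on the same $\Omega_N$ it simply cites Lemma~2.3 of \cite{GNS:MixingOpen}, and for $N<M$ it pads $\eta_t$ to $(\eta_t,1,\dots,1)\in\Omega_M$, observes that this is an ASEP on $\Omega_M$ with jumps across $(N,N+1)$ suppressed, and argues that such suppression can only help the ordering. Your approach is instead a direct, self-contained verification via the height function $H_\nu(x)=\sum_{i\ge x}(1-\nu(i))$, checking each of the four clock types at the moment of tightness and handling the excess clocks for $y\ge N$ by the trivial fact that $H_\eta(x)=0$ there. The trade-off is clear: the paper's argument is shorter because it outsources the equal-size case to an existing reference and reduces the unequal-size case to it, whereas your argument is longer but stands on its own and makes the monotonicity mechanism completely explicit; in particular your treatment makes the $N<M$ (or $M=\infty$) case no harder than the equal-size case, rather than requiring a separate extension step. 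One minor remark: you only write out the tightness analysis for the rate-$1$ clock and assert that the remaining three are ``the same elementary monotonicity step''; this is true, but in each of the other three cases the roles of $\eta$ and $\zeta$ in the dangerous direction are swapped (for rate-$q$ and rate-$\gamma$ the risk is that $\eta$ fires while $\zeta$ does not), so it is worth saying explicitly which process is the one that must be forced to fire.
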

\begin{proof} For two configurations $\eta,\zeta \in \Omega_N$, the first statement is Lemma 2.3 in \cite{GNS:MixingOpen}. Now let $N \leq M$, and note that we can extend $(\eta_t)_{t \geq 0}$ to a configuration $(\eta^{\prime}_{t})_{t \geq 0}$ in $\Omega_M$ by $\eta^{\prime}_t=(\eta_t,1,\dots,1)$ for all $t \geq 0$. Note that $(\eta^{\prime}_t)_{t \geq 0}$ has the law of an ASEP with open boundary on $\Omega_M$ where we suppress all jumps attempts between $N$ and $N+1$. We see that this must also preserve the ordering, as the suppressed jumps can only increase the configuration under this ordering. The second statement follows from the first.
\end{proof}

\subsection{Current theorems for the ASEP on the half-space}\label{sec:CurrentTheorems}

Suppose that we start both dynamics from the empty configuration, denoted by $\mathbf{0}$. Then let $(\mathcal{J}^{N}_t)_{t \geq 0}$ and $(\mathcal{J}^{\N}_t)_{t \geq 0}$ denote the \textbf{current} in $(\eta_{t})_{t \geq 0}$ and $(\xi_t)_{t \geq 0}$ respectively, where 
\begin{equation}
 \mathcal{J}^{N}_t := \lVert \eta_t \rVert_1 \qquad  \text{and} \qquad \mathcal{J}^{\N}_t := \lVert \xi_t \rVert_1 . 
\end{equation}
In words, the current counts the number of particles which have entered the segment by time $t$ minus the number of particles which have exited the segment by time $t$. We have that the current on the half-space stochastically dominates the current on the segment.

\begin{lemma}\label{lem:CurrentComparison}
Suppose that we start with two configurations $\eta \in \{0,1\}^{N}$ and $\xi \in \{0,1\}^{\N}$ such that $\eta(x)=\xi(x)$ for all $x \in [N]$, and $\xi(y)=0$ for all $y>N$. Then under the canonical coupling $\mathbf{P}$, 
\begin{equation}
\mathbf{P} \big( \mathcal{J}^{N}_t \leq  \mathcal{J}^{\N}_t \text{ for all  } t\geq 0 \, \big| \,  \eta_0=\eta \text{ and } \xi_0=\xi \big) = 1
\end{equation}
\end{lemma}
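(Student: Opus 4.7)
The plan is to prove the lemma by introducing a partial ordering $\preceq^{\ast}$ that is dual (in the particle-hole sense) to the ordering $\preceq$ of Lemma \ref{lem:DominationLeftmostParticle}, and to check that it is preserved by the canonical coupling. Specifically, for $\eta \in \Omega_N$ and $\xi \in \Omega_M$ with $M \leq \infty$ (assuming $\xi$ has finitely many particles when $M = \infty$), define
\begin{equation*}
\eta \preceq^{\ast} \xi \qquad \Longleftrightarrow \qquad \sum_{i=y}^{N}\eta(i) \,\leq\, \sum_{i=y}^{M}\xi(i) \qquad \text{for every } y \in \N .
\end{equation*}
Under the hypotheses of Lemma \ref{lem:CurrentComparison}, the initial configurations satisfy $\eta_0 \preceq^{\ast} \xi_0$ with equality in every tail, since $\xi_0(i)$ vanishes for $i > N$ and agrees with $\eta(i)$ on $[N]$.

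The core step is to show that $\preceq^{\ast}$ is almost surely preserved by the canonical coupling. I would proceed event by event, noting that each interior swap or boundary update changes the relevant tail sum only at a single index. The delicate case is always when exactly one of $\eta$ or $\xi$ performs an update: this forces a specific local disagreement along the affected edge (for instance, $\eta(x) = 1$ while $\xi(x) = 0$ at a rate-$1$ ring where only $\eta$ jumps), and combining this disagreement with the induction hypothesis at a neighboring tail index then shows that the pre-event gap at the affected index is already at least $1$, so it remains nonnegative after the update. The boundary rate-$\alpha$ and rate-$\gamma$ rings at site $1$ are handled in exactly the same manner, using the induction hypothesis at $y = 2$ to supply slack at $y = 1$. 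For rings on the edge $(N, N+1)$, which exist only in $\xi$, rightward rate-$1$ jumps trivially preserve the ordering by increasing $\sum_{i \geq N+1}\xi(i)$, while leftward rate-$q$ jumps decrease this sum by one but only fire when $\xi(N+1) = 1$, which on its own guarantees a pre-jump value of at least one on the $\xi$-side (the $\eta$-side being zero there).

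Given $\eta_t \preceq^{\ast} \xi_t$ for all $t \geq 0$, taking $y = 1$ immediately yields $\mathcal{J}^{N}_t = \lVert \eta_t \rVert_1 \leq \lVert \xi_t \rVert_1 = \mathcal{J}^{\N}_t$. The main difficulty is nothing conceptual but rather patient case analysis — identifying the correct auxiliary inequality and checking its preservation at each event type, with particular care at the edge $(N, N+1)$ which is absent from the segment dynamics. An alternative perspective is to recognise $\preceq^{\ast}$ as the image of $\preceq$ under the particle-hole involution: that involution turns the hypothesis that $\xi$ is empty past $N$ into the "finitely many holes" condition required by Lemma \ref{lem:DominationLeftmostParticle}, allowing that lemma to be invoked directly on the dual process (at the cost of adjusting the parameters $1 \leftrightarrow q$ and $\alpha \leftrightarrow \gamma$).
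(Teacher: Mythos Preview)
Your proposal is correct. The paper's own proof is a single sentence --- ``This is immediate from the canonical coupling and Lemma~\ref{lem:DominationLeftmostParticle}'' --- and what that sentence really encodes is exactly the particle--hole duality reduction you sketch in your final paragraph: under the involution that swaps $0\leftrightarrow 1$ (and correspondingly $1\leftrightarrow q$, $\alpha\leftrightarrow\gamma$), your ordering $\preceq^{\ast}$ becomes the paper's ordering $\preceq$, the hypothesis ``$\xi(y)=0$ for all $y>N$'' becomes the finiteness condition required there, and the preservation statement is then Lemma~\ref{lem:DominationLeftmostParticle} applied to the dual process. So your second route and the paper's route coincide.

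Your first route --- introducing $\preceq^{\ast}$ directly and checking preservation event by event --- is a legitimate, more self-contained alternative that avoids invoking duality and the parameter swap. One small remark: your discussion of the edge $(N,N+1)$ applies verbatim to every edge $(y,y+1)$ with $y\geq N$ (these all exist only in $\xi$), and you should say so rather than singling out $(N,N+1)$. Otherwise the case analysis is sound.
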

\begin{proof}
This is immediate from the canonical coupling and Lemma \ref{lem:DominationLeftmostParticle}.
\end{proof}

The following precise bounds for the current of the half-space ASEP where recently shown by the first author in \cite{H:Boundary}. The  case $\alpha=\frac{1}{2}$ and $\gamma=\frac{q}{2}$ was previously shown in~\cite{BBCW:HalfspaceASEP}.

\begin{theorem}\label{thm:CurrentEstimate}
Consider the half space ASEP started from the empty configuration, and assume that $q<1$ and $\alpha>\gamma>0$. Let $\rho$ denote the effective density of particles at $0$. Then for all $\rho \geq \frac{1}{2}$, and for $s \in \R$,
\begin{equation}
 \lim_{t \rightarrow \infty}\P\left( \mathcal{J}_{t/(1-q)}^{\N}-\frac{t}{4} \geq - 2^{-4/3} t^{1/3} s \right) = F_{\rho}(s) , 
\end{equation} 
where we recall that $F_{\rho}$ is the Tracy--Widom GSE distribution when $\rho>\frac{1}{2}$ the Tracy--Widom GOE distribution when $\rho=\frac{1}{2}$. 
Similarly, for  $\rho < \frac{1}{2}$, we have that
\begin{equation}
 \lim_{t \rightarrow \infty}\P\left( \mathcal{J}_{t/(1-q)}^{\N}- \mu t \geq - \sigma t^{1/2} s \right) = \Phi(s) , 
\end{equation} where $\Phi$ is the distribution of a standard Gaussian, and we recall $\mu,\sigma$ from \eqref{def:TimeFunction}.
\end{theorem}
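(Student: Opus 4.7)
The plan is to derive an exact formula for the distribution of $\mathcal{J}_t^{\N}$ as a Fredholm Pfaffian and then perform a steepest descent analysis whose nature depends on the position of a boundary-induced pole relative to the bulk critical point of the kernel.

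First, I would obtain an integrable formula. The half space ASEP with parameters $(\alpha,\gamma)$ is solvable via the reflection algebra of type $BC$: starting from the empty configuration, the $q$-moments $\E[q^{k \mathcal{J}_t^{\N}}]$ admit nested contour integral expressions in which the boundary parameters enter through reflection factors. Summing these moments yields a Fredholm Pfaffian formula for $\P(\mathcal{J}_t^{\N}\ge m)$. This is the approach of \cite{BBCW:HalfspaceASEP} at the Liggett-type boundary $\alpha=\tfrac{1}{2},\gamma=\tfrac{q}{2}$ (corresponding to $\rho=\tfrac{1}{2}$); extending it to general $(\alpha,\gamma)$ is what \cite{H:Boundary} accomplishes, presumably by incorporating the appropriate boundary-dependent $K$-matrix into the contour integrals, or by degenerating the half space stochastic six-vertex model.

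Second, I would perform the asymptotics. The Pfaffian kernel contains an integrand of the form $e^{t\,\Psi(z)}$ weighted by a rational factor whose simple pole sits at a point $z_\rho$ depending on $(\alpha,\gamma)$ and hence on $\rho$. The function $\Psi$ has a double critical point $z_c$ independent of the boundary, located so that the bulk velocity is $\tfrac{1}{4}$ and the KPZ fluctuation scale is $2^{-4/3}t^{1/3}$. If $\rho>\tfrac{1}{2}$, the pole $z_\rho$ lies strictly to one side of $z_c$ and, after a standard contour deformation, the steepest descent converges to the Tracy--Widom GSE kernel; the Pfaffian (rather than determinantal) structure is built in by the reflection-algebra framework. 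When $\rho=\tfrac{1}{2}$, the pole merges with $z_c$ and a local rescaling produces the GOE Pfaffian kernel. When $\rho<\tfrac{1}{2}$, the pole crosses $z_c$: the correct contour picks up a residue which dominates the steepest descent and yields a Gaussian integral centered at velocity $\mu=\rho(1-\rho)$ (the current carried by Bernoulli-$\rho$ product measure, which is the effective boundary density in this subcritical regime) with variance $\sigma^2=\rho(1-\rho)(1-2\rho)$, matching the stated CLT.

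The main obstacle is the asymptotic analysis at the critical value $\rho=\tfrac{1}{2}$, where pole and saddle coalesce and both the contour and spectral variable must be rescaled carefully to extract the GOE kernel — a delicate but standard scenario for Fredholm Pfaffians. A secondary difficulty is the $\rho<\tfrac{1}{2}$ regime, where the bulk saddle point no longer governs the leading asymptotics and one must isolate the pole contribution quantitatively, with uniform control on the error. An attractive alternative for the subcritical regime is to argue probabilistically: the half-line equilibrates rapidly to the Bernoulli-$\rho$ product measure near the boundary, so the current reduces to a sum of nearly independent boundary increments to which a local CLT applies, yielding the Gaussian limit and the correct $(\mu,\sigma^2)$ directly and bypassing the Pfaffian analysis.
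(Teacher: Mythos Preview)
The paper does not prove this theorem at all: it is quoted as an input from \cite{H:Boundary} (with the special case $\alpha=\tfrac12$, $\gamma=\tfrac q2$ attributed to \cite{BBCW:HalfspaceASEP}), and is used as a black box in the limit-profile argument. So there is no ``paper's own proof'' to compare against; the result is external.

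Your proposal is a reasonable high-level sketch of how the cited reference presumably proceeds --- Fredholm Pfaffian formula from half-space integrability, then a saddle-point/pole crossing analysis with the three regimes --- but as written it is a plan, not a proof. The substantive work (deriving the exact formula for general $(\alpha,\gamma)$, establishing uniform tail bounds for the Fredholm Pfaffian to justify the termwise limit, and carrying out the coalescing-pole analysis at $\rho=\tfrac12$) is precisely what \cite{H:Boundary} does and is far from routine. If the intent is simply to use the theorem, you should cite it as the paper does; if the intent is to reprove it, be aware that the sketch elides the hardest steps.
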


\subsection{Hitting times for the ASEP with one open boundary}

We reduce the limit profile bound in Theorem \ref{thm:Main} to a bound on leaving $\mathcal{A}_N$, where
\begin{equation}\label{def:HittingSet}
\mathcal{A}_N := \left\{ \eta  \in \Omega_M \, \colon \, \mathcal{R}(\eta) \geq \log^{1/16}(N) \right\} , 
\end{equation} where $M$ will be clear from the context, and where $\mathcal{R}(\eta)$ denotes the right-most empty site in the configuration $\eta$. A straightforward computation for the invariant measure $\mu$ -- see Lemma \ref{lem:MallowsBound} -- shows that 
\begin{equation}
\lim_{N \rightarrow \infty}\mu(\mathcal{A}_N) = 0 
\end{equation} whenever $q<1$ and $\alpha>\gamma>0$. We have the following relation to the limit profile.
\begin{lemma}\label{lem:HittingTimesLemma}
Consider the initial configuration where we start from the all empty configuration. Suppose there exists some constant $c_1$ and a sequence $(T_N)_{N \in \N}$ such that
\begin{equation}
\limsup_{N \rightarrow \infty}{\P}_N\left( \eta_{T_N} \notin \mathcal{A}_N \right) \leq c_1 ,
\end{equation}
Then the distance from stationarity  satisfies
\begin{equation}\label{eq:LowerBoundHitting}
\liminf_{N \rightarrow \infty} d_N(T_N) \geq 1-  c_1 . 
\end{equation} 
Conversely, suppose that we find some constant $c_2$ and a sequence $(T^{\prime}_N)_{N \in \N}$ such that
\begin{equation}
\liminf_{N \rightarrow \infty}{\P}_N\left( \eta_{t} \notin \mathcal{A}_N \text{ for some } t \in [0,T^{\prime}_N] \right) \geq c_2 . 
\end{equation} Then the distance from stationarity satisfies
\begin{equation}\label{eq:UpperBoundHitting}
\limsup_{N \rightarrow \infty} d_N(T^{\prime}_N+N^{1/4}) \leq 1-c_2 . 
\end{equation}
\end{lemma}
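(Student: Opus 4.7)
The plan handles the two implications separately. The lower bound \eqref{eq:LowerBoundHitting} follows from the test-set characterization of TV distance: specializing $A=\mathcal{A}_N$ in \eqref{def:TVDistance} gives
\begin{equation*}
d_N(T_N)\;\geq\;{\P}_N\bigl(\eta_{T_N}\in \mathcal{A}_N\bigr)-\mu(\mathcal{A}_N)\;\geq\;1-{\P}_N\bigl(\eta_{T_N}\notin \mathcal{A}_N\bigr)-\mu(\mathcal{A}_N),
\end{equation*}
and by hypothesis the second term is $\leq c_1+o(1)$ while the third is $o(1)$ by the quoted stationary computation; taking $\liminf_{N\to\infty}$ yields \eqref{eq:LowerBoundHitting}.

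For the upper bound \eqref{eq:UpperBoundHitting}, I run $(\eta_t)$ from $\mathbf{0}$ and $(\zeta_t)$ from $\mu$ under the canonical coupling $\mathbf{P}$ of Section \ref{sec:CanonicalCoupling}. Since $\mathbf{0}$ is maximal for $\preceq$ on $\Omega_N$, Lemma \ref{lem:DominationLeftmostParticle} gives $\zeta_t\preceq\eta_t$ and in particular $\mathcal{R}(\zeta_t)\leq\mathcal{R}(\eta_t)$ for every $t\geq 0$. Define the stopping time $\tau:=\inf\{t\geq 0:\eta_t\notin\mathcal{A}_N\}$; by hypothesis, $\liminf_{N\to\infty}\mathbf{P}(\tau\leq T'_N)\geq c_2$. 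On the event $\{\tau\leq T'_N\}$, both $\eta_\tau$ and $\zeta_\tau$ lie in $\mathcal{A}_N^c$. Setting $k:=\lceil\log^{1/16}(N)\rceil$, this means both configurations are identically~$1$ on $[k,N]$ at time $\tau$, and each contains fewer than $k$ holes.

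The main obstacle is to show that, conditional on $\{\tau\leq T'_N\}$, the two chains coalesce under the canonical coupling by time $\tau+N^{1/4}$ with conditional probability $1-o(1)$. Heuristically, the discrepancies between $\eta$ and $\zeta$ behave as second-class particles confined at time $\tau$ to the segment $[1,k-1]$ of length $\log^{1/16}(N)$; they have net leftward drift $(1-q)>0$ and are absorbed at site~$1$ by the entry reservoir at rate $\alpha>\gamma$. A leftward jump of the particle at position $k$ can briefly break agreement past~$k$, but the same drift restores it almost immediately. A formal argument will proceed by dominating the coupling time by the mixing time of an auxiliary ASEP on a segment of length $O(k)$ with a right boundary condition mimicking full occupancy; by the estimates of \cite{GNS:MixingOpen} this mixing time is $\mathrm{polylog}(N)\ll N^{1/4}$. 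Since coalescence is preserved by the canonical coupling, this gives $\mathbf{P}(\eta_{T'_N+N^{1/4}}=\zeta_{T'_N+N^{1/4}})\geq c_2-o(1)$; the coupling characterization of TV distance then yields $\limsup_{N\to\infty} d_N(T'_N+N^{1/4})\leq 1-c_2$, proving \eqref{eq:UpperBoundHitting}.
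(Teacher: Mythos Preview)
Your proof of the lower bound \eqref{eq:LowerBoundHitting} is correct and is exactly what the paper has in mind when it says the statement is ``immediate from the definition of $d_N(t)$ and the total-variation distance.''

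For the upper bound \eqref{eq:UpperBoundHitting} you take a different route than the paper, and there is a genuine gap. You couple the chain $(\eta_t)$ from $\mathbf{0}$ with a stationary chain $(\zeta_t)$ and argue that they coalesce with probability at least $c_2-o(1)$. Even granting the (admittedly sketchy) coalescence argument, the coupling inequality only gives
\[
\TV{\P_N(\eta_t\in\cdot\mid\eta_0=\mathbf{0})-\mu}\;\leq\;\mathbf{P}(\eta_t\neq\zeta_t),
\]
which bounds the distance from the \emph{single} initial state $\mathbf{0}$, not $d_N(t)=\max_{\eta}\TV{\P_N(\eta_t\in\cdot\mid\eta_0=\eta)-\mu}$. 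Nothing in your argument controls the worst-case distance. The fix is easy: couple instead with the chain from $\mathbf{1}$ (which is $\preceq$-minimal, so the same comparison $\mathcal{R}(\mathbf{1}_\tau)\leq\mathcal{R}(\eta_\tau)$ holds), and then the monotone sandwich $\mathbf{1}_t\preceq\eta'_t\preceq\mathbf{0}_t$ for every starting configuration $\eta'$ shows that coalescence of the two extremal chains forces coalescence of all chains, yielding the bound on $d_N$.

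The paper's argument avoids this altogether: rather than arguing directly about coalescence of two processes, it shows that the single process from $\mathbf{0}$ actually \emph{hits} the configuration $\mathbf{1}$ within time $N^{1/4}$ after entering $\mathcal{A}_N^c$ (citing an explicit expected-hitting-time bound from \cite{GNS:MixingOpen} together with Markov's inequality), and then invokes the standard inequality $d_N(t)\leq 1-\P_N(\eta_s=\mathbf{1}\text{ for some }s\leq t\mid\eta_0=\mathbf{0})$, which is Corollary~2.5 in \cite{GNS:MixingOpen}. This sidesteps both your gap and the need to control second-class particles or an auxiliary mixing time; your proposed route can be made to work, but it is more involved and at present incomplete.
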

\begin{proof} The first statement is immediate from the definition of $d_N(t)$ and the total-variation distance. For the second statement, let $\tau$ be the first time $t$ where $\eta_{t} \notin \mathcal{A}_N$. Using the strong Markov property of the ASEP with one open boundary, Lemma~5.2 in \cite{GNS:MixingOpen} to bound the expected hitting time of $\mathbf{1}$, and Markov's inequality, we get
\begin{equation*}
\limsup_{N\to\infty}\max_{ \zeta \notin \mathcal{A}_N } {\P}_{N} \big( \eta_{s} = \mathbf{1} \text{ for some } s\leq N^{1/4}  \, \big| \, \eta_0=\zeta \big)=1. 
\end{equation*}
Using again the strong Markov property of the ASEP with one open boundary, we conclude by the standard fact that
\begin{equation}\label{eq:HitFromExtreme}
d_{N}(t) \leq {\P}_N\left( \eta_{s}= \mathbf{1} \text{ for some } s\leq t \, | \, \eta_0=\mathbf{0} \right);
\end{equation} see for example Corollary 2.5 in \cite{GNS:MixingOpen}. 
\end{proof}
We have now all tools to show the lower bound on the limit profile.

\begin{proof}[Proof of the lower bound in Theorem \ref{thm:Main}] Let $\rho \geq \frac{1}{2}$ as  $\rho < \frac{1}{2}$ is similar.  
We consider the empty initial configuration. Since by Lemma~\ref{lem:CurrentComparison}, for all $t\geq 0$, the number of particles in the segment at time $t$ is stochastically dominated by the current of the half space ASEP, Theorem \ref{thm:CurrentEstimate} yields that at time $t=g_{\rho}(c)$, and all $\rho \geq \frac{1}{2}$, $c\in \R$
\begin{equation}
\liminf_{N \rightarrow \infty}{\P}_N( \lVert\eta_t\rVert_1 < N - \log^{1/16}(N)  ) \geq 1- F_{\rho}(c), 
\end{equation} where recall the notation $F_{\rho}(c)$ from \eqref{eq:Main}.  Using \eqref{eq:LowerBoundHitting} in Lemma \ref{lem:HittingTimesLemma}, we conclude the lower bound in Theorem \ref{thm:Main}. 
\end{proof}

\section{The ASEP as a random walk on a Hecke algebra}\label{sec:ASEPHecke}

We describe in the following the evolution of the multi-species ASEP with one open boundary as a random walk on a Hecke algebra. This connection was previously noted in \cite{B:Hecke}, but we give a more detailed explanation. For background on Coxeter groups and Hecke algebras, see \cite{BB:Coxeter,H:Coxeter}.

\subsection{The type $\mathbf{BC}$ Hecke algebra}
We let $B_n$ denote the hyperoctahedral group, which is the group of signed permutations on $n$ letters (which we will just call permutations). That is, $B_n$ is the set of permutations $\pi$ of $\{-n,\dotsc,-1,1,\dotsc, n\}$ such that $\pi(-i)=-\pi(i)$. We will view this as a Coxeter group, with generators $s_0=(-1,1)$ and $s_k=(k,k+1)$ for $0<k<n$. We will want to write elements in one line notation, which means we place a number $i$ at position $j$ (for $i,j\in \pm [1,n]$) if $\pi(j)=i$.
We define the \textbf{length} $l(\pi)$ to be the minimum number of generators needed to write $\pi$, and we call any such representation a \textbf{reduced word}. We may write $l(\pi)=l_0(\pi)+l_1(\pi)$, where $l_0$ counts the number of $s_0$'s and $l_1$ counts the number of $s_k$'s for $k>0$ used in any decomposition $\pi=s_{k_1}\dotsm s_{k_n}$ with $n$ minimal, and in particular, this is well defined.

Multiplication by $s_k$ on the left or right changes $l(\pi)$ by exactly $1$, on the left the length increases or decreases depending on whether the numbers $k$ and $k+1$ (or $1$ and $-1$ if $k=0$) are increasing, and on the right the length increases or decreases depending on whether the numbers at positions $k$ and $k+1$ (or $1$ and $-1$ if $k=0$) are increasing.

Let $q,r$ be formal parameters, and for convenience we let $q_i=q$ if $i>0$, and $q_0=r$. Later on, we will set $r=\gamma/\alpha$ to match the ASEP with open boundary. We define the $\mathbf{BC}$ \textbf{Hecke algebra} to be the associative algebra $\mathcal{H}=\mathcal{H}_{q,r}(B_n)$ over $\mathbf{C}(q,r)$ defined by taking as a basis basis $T_w$ for $w\in B_n$, with relations
\begin{alignat*}{2}
T_uT_w&=T_{uw}&&\qquad \text{ if }l(uw)=l(u)+l(w),
\\(T_i+q_i)(T_i-1)&=0,&&
\end{alignat*}
where $T_i=T_{s_i}$, with $s_i$ the Coxeter generators for $B_n$ defined above. We have $T_{id}=1$. The above relations imply that
\begin{align}
    \label{eq: R_k right} T_\pi T_k&=\begin{cases}T_{\pi s_k}&\text{ if }l(\pi s_k)>l(\pi),
    \\q_kT_{\pi s_k}+(1-q_k)T_{\pi}&\text{ if }l(\pi s_k)<l(\pi).
    \end{cases}
    \\\label{eq: R_k left} T_k T_\pi&=\begin{cases}T_{s_k\pi}&\text{ if }l(s_k \pi)>l(\pi),
    \\q_kT_{s_k\pi}+(1-q_k)T_{\pi}&\text{ if }l(s_k\pi)<l(\pi).
    \end{cases}
\end{align}
We say that an element of $\mathcal{H}$ is a \textbf{probability distribution} if the coefficients in the $T_{\pi}$ basis are non-negative and sum to $1$.

Given $[a,b]=\{a,a+1,\dotsc, b\}$, we let $B_{[a,b]}$ be the parabolic subgroup of $B_n$ generated by $s_i$ for $i\in [a,b-1]$, and let $\mathcal{H}_{[a,b]}$ be the parabolic subalgebra generated by $T_{s_i}$ for $i\in [a,b-1]$. Note that $B_{[0,b]}\cong B_b$ and $B_{[a,b]}\cong S_{b-a}$ if $a>0$, where $S_{b-a}$ denotes the symmetric group. Note that $B_{[a,b]}$ always acts on $\pm [a,b]$ if $a>0$, and on $\pm [1,b]$ if $a=0$. We will also write $B_{\pm [a,b]}=B_{[a,b]}$ and $\mathcal{H}_{\pm[a,b]}=\mathcal{H}_{[a,b]}$ for convenience.

A key property is that the Hecke algebra $\mathcal{H}$ has an anti-involution $\iota$, which sends $\iota(T_w)=\iota(T_{w^{-1}})$ for all $w\in B_n$.

\subsection{Hecke algebras and the ASEP}
We now explain how the ASEP with one open boundary can be viewed as a random walk on a Hecke algebra. We specialize $q$ to match the $q$ parameter in the ASEP and set $r=\gamma/\alpha$. 

We can view elements in $B_n$ and its parabolic quotients $B_n/(B_{[0,k]}\times B_{[k+1,n]})$ as configurations in a particle system on a finite half open line segment. In particular, we view $\pi$ as a configuration with a particle of color $\pi(i)$ at position $i$, and an element $\pi\in B_n/(B_{[1,k]}\times B_{[k+1,n]})$ (for any choice of representative) as a configuration where we take the configuration corresponding to $\pi$, and identify the colors $[k+1,n]$ as $\infty$, $[1,k]$ as $1$, and $-[1,k]$ as $2$, and $[-n,-k-1]$ as $3$. Finally, while we keep track of the colors at negative positions, it is clear that by symmetry, we can forget about this information, and only keep track of what occurs at positive positions.

We now wish to introduce dynamics. We place rate $1$ Poisson clocks at each edge $(k,k+1)$ (associated to $s_k$), and a rate $\alpha$ Poisson clock at the edge $(-1,1)$ (associated to $s_0$). For all $t\geq 0$, we define a random element $W_t$ of $\mathcal{H}$ by starting with $W_0=T_{id}$, and multiplying by $T_k$ on the left every time the clock associated to $s_k$ rings. More generally, if we wish to start from an arbitrary initial state $\pi$, we can simply consider the element $W_t T_\pi $, or even more generally any element of $\mathcal{H}$ which is a probability distribution.

\begin{lemma}
The $W_t$ are a probability distribution for all $t\geq 0$, i.e. the coefficients in the $T_\pi$ basis are non-negative, and sum to $1$.
\end{lemma}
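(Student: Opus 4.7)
The plan is to proceed by induction on the number of clock rings that have occurred by time $t$. Since $W_t$ is constant between clock rings and $W_0 = T_{id}$ is trivially a probability distribution, it suffices to show that if $W = \sum_{\pi \in B_n} c_\pi T_\pi$ is a probability distribution in $\mathcal{H}$, then $T_k W$ is also a probability distribution for every generator $T_k$ (where $0 \le k < n$).

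For non-negativity of the coefficients, I would expand $T_k W = \sum_\pi c_\pi T_k T_\pi$ and apply the left-multiplication formula \eqref{eq: R_k left} term by term. In the length-increasing case, $T_k T_\pi = T_{s_k \pi}$, which contributes a non-negative coefficient $c_\pi$ to $T_{s_k\pi}$. In the length-decreasing case, $T_k T_\pi = q_k T_{s_k\pi} + (1-q_k)T_\pi$, which contributes $q_k c_\pi$ and $(1-q_k)c_\pi$. Under the standing assumptions $q \in (0,1)$ and $\alpha > \gamma > 0$ (so $r = \gamma/\alpha \in (0,1)$), both $q_k$ and $1-q_k$ lie in $[0,1]$, so all contributions are non-negative. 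Summing over $\pi$ and collecting terms, every coefficient of $T_k W$ in the $T_\sigma$ basis is a non-negative combination of the $c_\pi$, hence non-negative.

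For the claim that the coefficients sum to one, I would introduce the augmentation map $\epsilon \colon \mathcal{H} \to \mathbf{C}(q,r)$ defined by $\epsilon(T_w) = 1$ for all $w \in B_n$, and verify it is a well-defined algebra homomorphism by checking it respects the defining relations: the braid-style relation $T_u T_w = T_{uw}$ under $l(uw)=l(u)+l(w)$ sends both sides to $1$, and the quadratic relation becomes $(1+q_i)(1-1)=0$. For any element $X = \sum_\pi c_\pi T_\pi$, $\epsilon(X)$ equals the sum of coefficients. Since $\epsilon(W_0)=\epsilon(T_{id})=1$ and $\epsilon$ is a homomorphism, $\epsilon(T_k W_{t_-}) = \epsilon(T_k)\epsilon(W_{t_-}) = \epsilon(W_{t_-})$, so the coefficient sum is preserved at every clock ring and equals $1$ for all $t \geq 0$. (Alternatively, one can verify directly from \eqref{eq: R_k left} that $T_k T_\pi$ has coefficient sum $1$ in each of the two cases, and invoke linearity.)

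The only delicate point is ensuring that the length-decreasing case yields non-negative weights, which requires $q_k \in [0,1]$; this is where the hypothesis $\alpha > \gamma$ (guaranteeing $r = \gamma/\alpha < 1$) enters. Beyond that, the argument is essentially a bookkeeping exercise, with the augmentation map providing a conceptually clean way to handle the normalization.
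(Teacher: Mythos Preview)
Your proposal is correct and follows essentially the same approach as the paper: reduce to showing that each left-multiplication by a generator $T_k$ preserves the probability-distribution property, then check non-negativity and coefficient-sum preservation via the multiplication rule \eqref{eq: R_k left}. The augmentation-map argument for the sum is a pleasant conceptual repackaging of the direct verification the paper alludes to, but not a genuinely different route.
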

\begin{proof}
By linearity and conditioning on the Poisson clocks, it suffices to show this for any deterministic product $T_{k_1}\dotsm T_{k_n}$. This can be done by induction on $n$. In particular, the multiplication rules \eqref{eq: R_k right} and \eqref{eq: R_k left} ensure that no negative coefficients appear, and that the sums of the coefficients are preserved.
\end{proof}

Given an element $W\in \mathcal{H}$ which is a probability distribution, we obtain a distribution on $B_n$ by considering the coefficients in the $T_\pi$ basis as probabilities. The key fact connecting the ASEP with Hecke algebras is that $W_t$ gives the distribution of the ASEP at time $t$.
More precisely, it gives rise to a \textbf{fully colored ASEP} with the following description. For a permutation $\pi$, its colors are given as elements of $\pm[1,N]$. For each edge $e$, we assign independent rate $1$ and rate $q$ Poisson clocks. Whenever the rate $1$ clock rings, we sort the colors along the respective edge in decreasing order, and for the rate $q$ clock in decreasing order. 
At the boundary site $1$, a color $i$ gets replaced with $-i$ at rate $\alpha$ if $i \geq 1$, and rate $\gamma$, otherwise.

\begin{lemma}
Let $W_0$ be a probability distribution in $\mathcal{H}$. Then $W_t$ gives the distribution of the fully colored ASEP with one open boundary at time $t$, started from the random initial configuration $W_0$.
\end{lemma}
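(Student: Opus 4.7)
The plan is to reduce the claim to a generator matching argument. By linearity of Hecke multiplication and the preceding lemma (which ensures $W_t$ remains a probability distribution in the $T_\pi$-basis), it suffices to fix $W_0=T_\pi$ for an arbitrary $\pi \in B_n$ and show that the random element $W_t T_\pi$, viewed as a coefficient measure on $B_n$, evolves as a continuous-time Markov chain whose transition rates coincide with those of the fully colored ASEP started at $\pi$.

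Using the multiplication rule \eqref{eq: R_k left}, when the Poisson clock associated with $s_k$ rings — at rate $1$ for $k\geq 1$ and at rate $\alpha$ for $k=0$ — the basis element $T_\pi$ is transformed according to
\[
T_k T_\pi = \begin{cases} T_{s_k\pi} & \text{if } l(s_k\pi) > l(\pi), \\ q_k T_{s_k\pi} + (1-q_k) T_\pi & \text{if } l(s_k\pi) < l(\pi).\end{cases}
\]
Reading off coefficients yields the jump rates on $B_n$: for $k\geq 1$, the rate $\pi \to s_k\pi$ is $1$ if $l(s_k\pi)>l(\pi)$ and $q$ otherwise; for $k=0$, the rate is $\alpha$ or $\alpha r=\gamma$ in the corresponding cases. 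These are exactly the rates prescribed by the fully colored ASEP: the generator $s_k$ (for $k\geq 1$) swaps values $k$ and $k+1$ in the one-line notation, matching the "sort along the $s_k$-edge" rule, with $l(s_k\pi) > l(\pi) \Leftrightarrow \pi^{-1}(k) < \pi^{-1}(k+1)$ encoding the "not-yet-sorted" configuration. The boundary generator $s_0$ swaps the values $\pm 1$, and by the $B_n$-symmetry $\pi(-i) = -\pi(i)$ this projects onto the sign-flip at site $1$ with rates $\alpha$ and $\gamma$.

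With the rate match verified, the conclusion follows from the standard uniqueness of continuous-time Markov chains with a given generator and initial distribution, since $W_0$ matches the initial distribution of the fully colored ASEP by assumption. The main obstacle is the precise interpretation of the fully colored ASEP dynamics under the Hecke-algebraic description, in particular the identification of the global $s_0$-swap of $\pm 1$ with the local boundary sign-flip at site $1$: one needs the signed-permutation conventions on $B_n$ and the implicit "color-position" duality to verify that the projection onto the positive positions produces exactly the prescribed insertion/removal rates. For the bulk generators ($k\geq 1$) the identification is essentially bookkeeping using \eqref{eq: R_k left}, but care must be taken at $s_0$ to justify the rate-$\alpha$ and rate-$\gamma$ split via the substitution $r = \gamma/\alpha$.
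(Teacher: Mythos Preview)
Your approach is the same as the paper's: verify that multiplying by $T_k$ reproduces the ASEP transition probabilities. The paper does this in a few sentences by recasting the ASEP so that each edge carries a single clock (rate $1$ in the bulk, rate $\alpha$ at the boundary) with conditional swap probability $1$ or $q_k$, coupling these clocks to the Hecke walk, and then reading the probabilities off the multiplication rule.

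There is, however, a genuine error in your execution: the identification of left multiplication with the edge-sorting move. Under the paper's one-line convention, left multiplication $\pi\mapsto s_k\pi$ swaps the \emph{values} $k$ and $k+1$ (wherever they sit), whereas the fully colored ASEP sorts the colors at \emph{positions} $k$ and $k+1$, i.e.\ the right action $\pi\mapsto\pi s_k$. These differ already for $\pi=(2,1,3)$ and $k=2$: one has $s_2\pi=(3,1,2)$ but $\pi s_2=(2,3,1)$. The boundary case is the same: $s_0\pi$ flips the values $\pm 1$ wherever they are, while the ASEP rule negates the color sitting at site~$1$, which is $\pi s_0$. For this reason the paper invokes \eqref{eq: R_k right}, not \eqref{eq: R_k left}; the governing condition is whether $\pi(k)<\pi(k+1)$, not $\pi^{-1}(k)<\pi^{-1}(k+1)$. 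Your appeal to ``color-position duality'' is the right instinct---indeed $(s_k\pi)^{-1}=\pi^{-1}s_k$---but as written it does not close the gap: you still need to argue that the Hecke walk driven by the ASEP clocks produces the right-multiplication dynamics on the configuration, which is precisely what \eqref{eq: R_k right} delivers.
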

\begin{proof}

This is seen after noting that an equivalent description of the half space colored ASEP is to have a single rate $1$ Poisson clock for each edge, and swap with probability either $1$ or $q$, depending on the order of the numbers at the two sites adjacent to that edge. At the boundary, we instead have a clock of rate $\alpha$, and swap with probabilities $1$ or $r=\gamma/\alpha$. We can then couple the Poisson clocks in the ASEP with the ones defining the random walk on the Hecke algebra. The multiplication rule \eqref{eq: R_k right} exactly encodes the probabilities of a swap occurring at a particular edge, depending on the ordering of the numbers at that edge.
\end{proof}
We can obtain our original system of interest, the half open ASEP with just particles and holes, by projecting to the parabolic quotient $B_n/B_{[1,n]}$, which has the effect of labeling all positive numbers as particles and all negative numbers as holes. A similar statement holds when projecting to the multi-species extension for first, second and third class particles. 

\begin{lemma}
Consider the projection of the fully colored ASEP where we map colors $[-N,-K]$ to $1$, colors $[-K+1,-1]$ to $2$, colors $[1,K-1]$ to $3$, and colors $[K,N]$ to $\infty$, for some fixed $K$.
 Then this projection has the same law as the multi-species extension of the ASEP with one open boundary on the interval $[1,N]$.    
\end{lemma}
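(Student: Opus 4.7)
The plan is to verify by a direct rate-by-rate comparison that the projected process has the same infinitesimal generator as the multi-species ASEP with one open boundary described in Section~\ref{sec:CanonicalCoupling}. The projection is well-defined at the level of configurations, since each color in $\pm[1,N]$ belongs to exactly one of the four intervals $[-N,-K]$, $[-K+1,-1]$, $[1,K-1]$, $[K,N]$. The key structural observation is that these intervals, read along the number line, correspond to the four multi-species classes arranged consistently with the partial order $1 >_p 2 >_p 3 >_p \infty$; this alignment is what makes the Hecke algebra sorting rule compatible with the multi-species sorting rule after projection.

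For the bulk transitions, I would fix an edge $(k,k+1)$ with $k \geq 1$ and analyze what the rate $1$ and rate $q$ clocks do to the two adjacent colors. The fully colored dynamics depends only on the pairwise order of colors: when the two colors lie in distinct classes, the swap the clock induces is precisely the one prescribed by the multi-species rule, while when they lie in the same class the swap leaves the projected configuration unchanged. In either case the projected transition rate out of a configuration depends only on the projected configuration and matches the corresponding multi-species bulk rate.

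For the boundary, at site $1$ the rate $\alpha$ clock flips $i \mapsto -i$ for $i \geq 1$, sending $[K,N]$ to $[-N,-K]$ and $[1,K-1]$ to $[-K+1,-1]$; under the projection this is $\infty \mapsto 1$ and $3 \mapsto 2$. Symmetrically, the rate $\gamma$ clock sends $1 \mapsto \infty$ and $2 \mapsto 3$. These coincide with the boundary rules listed for the multi-species ASEP in Section~\ref{sec:CanonicalCoupling}.

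The main (minor) obstacle is lumpability: the projected chain is genuinely Markov only if the rate out of any projected state depends only on the projected state itself. Here lumpability holds automatically because the colored transitions are governed by the pairwise order of colors, the projection preserves order between colors in distinct classes, and within-class swaps are invisible to the projection. Putting these rate computations together shows that the lumped generator agrees term by term with the generator of the multi-species ASEP on $[1,N]$, which is exactly what the lemma asserts.
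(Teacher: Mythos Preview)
Your proposal is correct and follows exactly the approach the paper takes: the paper's entire proof is the single sentence ``This follows immediately from verifying the marginal transition rates,'' and you have simply written out that verification in detail, checking the bulk sorting rule and the boundary sign-flip rule and noting the lumpability that makes the projection Markovian.
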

\begin{proof}
    This follows immediately from verifying the marginal transition rates.
\end{proof}
We note the following useful property.
\begin{lemma}
\label{lem: W_t inv}
We have $\iota(W_t)$ has the same distribution as $W_t$.
\end{lemma}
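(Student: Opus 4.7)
The plan is to exploit that each Coxeter generator $s_i$ is an involution, together with the time-reversal symmetry of the independent Poisson clocks driving the walk. Because $s_i = s_i^{-1}$, the anti-involution $\iota$ fixes every basis element $T_i = T_{s_i}$: namely $\iota(T_i) = T_{s_i^{-1}} = T_i$.

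First, I would write $W_t$ explicitly as a random word in the $T_i$. Conditioning on the Poisson data on $[0,t]$, let $0 < t_1 < t_2 < \cdots < t_m \leq t$ be the ring times and $k_1, k_2, \ldots, k_m$ be the corresponding edge labels. By the definition of $W_t$ (multiplying on the left by $T_k$ at each clock ring),
\begin{equation*}
W_t = T_{k_m} T_{k_{m-1}} \cdots T_{k_1}.
\end{equation*}
Since $\iota$ is an anti-homomorphism fixing each $T_i$, it reverses the order of the product:
\begin{equation*}
\iota(W_t) = \iota(T_{k_1}) \iota(T_{k_2}) \cdots \iota(T_{k_m}) = T_{k_1} T_{k_2} \cdots T_{k_m}.
\end{equation*}

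The substantive step is to show that this reversed product has the same law as $W_t$. This follows from time-reversal of the Poisson point processes: the map $t_i \mapsto t - t_i$ preserves the joint distribution of the superposition of the independent Poisson processes on the edges, so the sequence of edge labels $(k_1, k_2, \ldots, k_m)$ read in forward time order has the same joint law as the sequence $(k_m, k_{m-1}, \ldots, k_1)$ read in reverse time order. Equivalently, conditional on the total number of rings $m$, the labels $k_i$ are i.i.d.\ with distribution proportional to the clock rates, and reversing an i.i.d.\ sequence does not change its joint law. Consequently $T_{k_1} T_{k_2} \cdots T_{k_m}$ has the same distribution as $T_{k_m} T_{k_{m-1}} \cdots T_{k_1}$, which is exactly the claim $\iota(W_t) \stackrel{d}{=} W_t$. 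The only nontrivial ingredient is the Poisson time-reversal; the rest is immediate from $\iota$ being an anti-homomorphism fixing each generator, and no Hecke algebra relations beyond this are needed.
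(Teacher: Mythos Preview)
Your proof is correct and follows essentially the same approach as the paper's. The paper phrases the symmetry slightly differently—conditioning on the number of rings of each individual clock and noting that the resulting word is uniformly distributed over all orderings of those letters—whereas you invoke Poisson time-reversal (equivalently, that the labels are i.i.d.\ given the total count); these are two formulations of the same exchangeability, and your added remark that $\iota(T_i)=T_i$ makes explicit why $\iota$ simply reverses the word.
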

\begin{proof}
This follows from the fact that after conditioning on the number of times each Poisson clock rings, $W_t=T_{k_1}\dotsm T_{k_m}$ is uniformly distributed among possible orderings for the product, and the same is true of $\iota(W_t)=T_{k_m}\dotsm T_{k_1}$.
\end{proof}

\subsection{Mallows elements}
We let $\mu_{q,r}(\pi)=r^{-l_0(\pi)}q^{-l_1(\pi)}Z_{q,r}^{-1}$, where (see \cite{M:Coxeter})
\begin{equation*}
    Z_{q,r}=\sum_{\pi\in B_n}r^{-l_0(\pi)}q^{-l_1(\pi)}=\prod_{i=1}^{n} \frac{1-q^{-i}}{1-q}(1+q^{i-1}r).
\end{equation*}
Note that when $q=r$, this reduces to the usual normalizing constant for the Mallows distribution on $B_n$. This is the stationary distribution for the colored ASEP on a finite half open segment, which can be easily verified by checking that the ASEP is reversible with respect to this measure. 
We will let $\mathcal{M}_{[a,b]}$ denote the \textbf{Mallows element} of $\mathcal{H}$ associated to $[a,b]$, defined by
\begin{equation}\label{def:MallowsElement}
\mathcal{M}_{[a,b]}=\sum_{w\in B_{[a,b]}}r^{-l_0(w)}q^{-l_1(w)}Z_{q,r}^{-1}T_w
\end{equation}
if $a=0$, and
\begin{equation}\label{def:MallowsElement2}
\mathcal{M}_{[a,b]}=\sum_{w\in B_{[a,b]}}q^{-l(w)}Z_{q}^{-1}T_w,
\end{equation}
where $Z_{q}=\prod_{i=1}^n\frac{1-q^{-i}}{1-q}$. Here, we are viewing $B_{[a,b]}$ as a subgroup of $B_n$ in the natural way.
\begin{lemma}
For any $\pi\in B_{[a,b]}$, for some $0\leq a < b$, we have
\begin{equation*}
T_\pi\mathcal{M}_{[a,b]}=\mathcal{M}_{[a,b]}T_\pi=\mathcal{M}_{[a,b]}.
\end{equation*}
\end{lemma}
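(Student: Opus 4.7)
The plan is to reduce the statement to the case of single Coxeter generators and then to verify the identity $T_i \mathcal{M}_{[a,b]} = \mathcal{M}_{[a,b]}$ for each generator $s_i$ of $B_{[a,b]}$. Once this is established, given any $\pi \in B_{[a,b]}$ with reduced expression $\pi = s_{i_1}\cdots s_{i_k}$, the Hecke relation $T_u T_w = T_{uw}$ whenever $l(uw)=l(u)+l(w)$ gives $T_\pi = T_{i_1}\cdots T_{i_k}$, and iterating the generator identity yields $T_\pi \mathcal{M}_{[a,b]} = \mathcal{M}_{[a,b]}$.

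To prove $T_i \mathcal{M}_{[a,b]} = \mathcal{M}_{[a,b]}$, I would partition $B_{[a,b]}$ into pairs $\{w, s_i w\}$; in each pair, exactly one element, say $w$, satisfies $l(s_iw)>l(w)$. Writing $c_u$ for the coefficient of $T_u$ in $\mathcal{M}_{[a,b]}$, the combined contribution of the pair to $T_i \mathcal{M}_{[a,b]}$ is, by \eqref{eq: R_k left},
\begin{equation*}
c_w T_{s_iw} + c_{s_iw}\bigl(q_i T_w + (1-q_i) T_{s_iw}\bigr).
\end{equation*}
For this to reproduce $c_w T_w + c_{s_iw} T_{s_iw}$, the only requirement is $c_w = q_i c_{s_iw}$; the coefficient of $T_{s_iw}$ then balances automatically. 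Since passing from $w$ to $s_i w$ increments $l_0$ by $1$ when $i=0$ and increments $l_1$ by $1$ when $i>0$, the defining formulas \eqref{def:MallowsElement} and \eqref{def:MallowsElement2}, together with $q_0=r$ and $q_i=q$ for $i>0$, yield exactly $c_w/c_{s_iw} = q_i$ in both the case $a=0$ and the simpler symmetric-group case $a>0$.

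For the right-multiplication identity $\mathcal{M}_{[a,b]} T_\pi = \mathcal{M}_{[a,b]}$, I would apply the anti-involution $\iota$. Because $l_0(w^{-1})=l_0(w)$ and $l_1(w^{-1})=l_1(w)$, the Mallows element is fixed by $\iota$, and applying $\iota$ to the already-established identity $T_{\pi^{-1}} \mathcal{M}_{[a,b]} = \mathcal{M}_{[a,b]}$ yields $\mathcal{M}_{[a,b]} T_\pi = \mathcal{M}_{[a,b]}$ for every $\pi \in B_{[a,b]}$. The main obstacle is purely bookkeeping: one has to correctly identify which element of each pair $\{w, s_iw\}$ is the longer one, and match the power of $q_i$ coming from the Hecke relation with the length increment in the Mallows weight.
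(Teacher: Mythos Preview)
Your proposal is correct and follows essentially the same approach as the paper: reduce to generators via $T_\pi = T_{i_1}\cdots T_{i_k}$, then verify $T_i\mathcal{M}_{[a,b]}=\mathcal{M}_{[a,b]}$ by pairing $w$ with $s_iw$ and checking the coefficient identity $c_w=q_i c_{s_iw}$. The one cosmetic difference is that the paper handles $\mathcal{M}_{[a,b]}T_\pi$ by the same direct computation (``extremely similar''), whereas you deduce it from the left identity via the anti-involution $\iota$; your route is perfectly valid and in fact anticipates the next lemma in the paper.
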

\begin{proof}
It suffices to show this for $\pi=s_k$, since for any reduced word decomposition $\pi=s_{k_1}\dotsm s_{k_l}$, we have $T_\pi=T_{k_1}\dotsm T_{k_l}$. If $k>1$, then we can compute
\begin{equation*}
\begin{split}
    Z_{q,r}T_k\mathcal{M}_{[a,b]}&=\sum_{l(s_kw)>l(w)}r^{-l_0(w)}q^{-l_1(w)}T_{s_k w}+\sum_{l(s_kw)>l(w)}r^{-l_0(s_kw)}q^{-l_1(s_kw)}(qT_{w}+(1-q)T_{s_k w})
    \\&=\sum_{l(s_kw)>l(w)}r^{-l_0(s_k w)}q^{-l_1(s_k w)}T_{s_k w}+\sum_{l(s_kw)>l(w)}r^{-l_0(w)}q^{-l_1(w)}T_{w}
    \\&=Z_{q,r}\mathcal{M}_{[a,b]}.
\end{split}
\end{equation*}
The other computations are extremely similar.
\end{proof}

We note that if $\pi$ is any configuration, then $ \mathcal{M}_{[a,b]}T_\pi$ has the effect of bring the interval $[a,b]$ into equilibrium, which is the Mallows distribution up to a relabeling of the numbers appearing at positions $[a,b]$ while keeping the relative ordering preserved.

\begin{lemma}\label{lem:InvolutionHecke}
We have $\iota(\mathcal{M}_{[a,b]})=\mathcal{M}_{[a,b]}$.
\end{lemma}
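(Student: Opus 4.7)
The plan is to unwind the definition of $\mathcal{M}_{[a,b]}$, apply $\iota$ termwise via linearity, and then reindex using the bijection $w \mapsto w^{-1}$ on $B_{[a,b]}$. The only non-tautological input will be the claim that $l_0$ and $l_1$ (and hence the total length $l$) are invariant under inversion.

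More concretely, I would first recall that $\iota$ is linear and satisfies $\iota(T_w) = T_{w^{-1}}$ for all $w \in B_n$, so applying $\iota$ to \eqref{def:MallowsElement} (or \eqref{def:MallowsElement2} in the case $a > 0$) gives
\begin{equation*}
\iota(\mathcal{M}_{[a,b]}) = \sum_{w \in B_{[a,b]}} r^{-l_0(w)} q^{-l_1(w)} Z_{q,r}^{-1} T_{w^{-1}}.
\end{equation*}
Substituting $u = w^{-1}$, which is a bijection of $B_{[a,b]}$ onto itself, yields
\begin{equation*}
\iota(\mathcal{M}_{[a,b]}) = \sum_{u \in B_{[a,b]}} r^{-l_0(u^{-1})} q^{-l_1(u^{-1})} Z_{q,r}^{-1} T_{u},
\end{equation*}
and it remains to match coefficients with $\mathcal{M}_{[a,b]}$, for which I need $l_0(u^{-1}) = l_0(u)$ and $l_1(u^{-1}) = l_1(u)$.

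For this last step, I would note that if $u = s_{k_1} \cdots s_{k_m}$ is a reduced word, then reversing gives $u^{-1} = s_{k_m} \cdots s_{k_1}$, which has the same length $m$ and therefore is also reduced. Since this reversed word involves precisely the same multiset of generators $s_{k_1}, \dots, s_{k_m}$, the number of $s_0$'s and the number of $s_k$'s with $k > 0$ are unchanged, giving $l_0(u^{-1}) = l_0(u)$ and $l_1(u^{-1}) = l_1(u)$ (these are well-defined quantities by the remark in the paragraph preceding \eqref{eq: R_k right}). Plugging back in gives $\iota(\mathcal{M}_{[a,b]}) = \mathcal{M}_{[a,b]}$. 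The case $a > 0$ is handled identically using the expression for $\mathcal{M}_{[a,b]}$ in terms of the ordinary length $l$, and invariance of $l$ under inversion.

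There is essentially no obstacle here; the argument is a three-line symmetry check. The only thing to be a bit careful about is that we are working in the $\mathbf{BC}$ Hecke algebra rather than a type $A$ algebra, so one must verify $l_0$ and $l_1$ separately rather than only the total length, but this is immediate from the reversal argument above.
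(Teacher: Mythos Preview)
Your proposal is correct and follows essentially the same approach as the paper's own proof: both argue that $l_0$ and $l_1$ are preserved under inversion by reversing a reduced word, and then reindex over the subgroup $B_{[a,b]}$ using $w\mapsto w^{-1}$. Your version is simply a more detailed write-up of the paper's two-sentence argument.
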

\begin{proof}
Note that $l_0(\pi^{-1})=l_0(\pi)$ and $l_1(\pi^{-1})=l_1(\pi)$ as the $s_k$ are involutions, so reversing their order does not change the number of times each $s_k$ occurs. The claim immediately follows since $B_{[a,b]}$ is a subgroup so it is closed under inversion.
\end{proof}

For $\eta$ obtained as a projection of Mallows element, we obtain precise bounds on the location of the right-most particle $\mathcal{R}(\eta)$. Fix an interval $[a,b]$  and let $\pi \sim \mathcal{M}_{[a,b]}$ on $B_n$ for some $n\in \N$. For $a>0$ and $k \in [b-a]$, let $\eta^{[a,b],k}_{\pi} \in \Omega_{|b-a|}$ be the configuration where we map the the $b-a-k$ largest values in $(\pi(x))_{x \in [a,b]}$ to particles, and the remaining values in $(\pi(x))_{x \in [a,b]}$ to empty sites, and then project to the interval $[a,b]$. For $a=0$ and $k\in [-b,b]$, we obtain $\eta^{b,k}_{\pi} \in \Omega_{b} $ from $(\pi(x))_{x \in [-b,b]}$ by identifying the $b-k$ largest entries with particles, the remaining $k$ entries as empty sites, and then projecting to $[b]$.

Let us point out that the above construction can  be interpreted as a projection of the multi-species extension of the open ASEP with first, second and third class particles. Although not preserving the Markovian dynamics, the locations of holes are preserved. 

\begin{lemma}\label{lem:MallowsBound}  
Let $a>0$ and $k\in [b-a]$. Then for some $c,C>0$, depending on $\alpha,\gamma,q$, 
\begin{equation}
\P( \mathcal{R}(\eta^{[a,b],k}_{\pi}) \geq k+ x ) \leq C \exp( -c x) . 
\end{equation} 
for all $x>0$ large enough. Similarly, when $a=0$ and $k\in [-b,b]$, we get that
\begin{equation}\label{eq:SecondClaimMallows}
\P( \mathcal{R}(\eta^{b,k}_{\pi}) \geq k+ x ) \leq C \exp( -c x) . 
\end{equation} 
In particular, we have that for all $k<0$
\begin{equation}
\P( \eta^{b,k}_{\pi}(x)=1 \text{ for all } x \in [b] ) \geq 1 -  C \exp( -c |k| ) . 
\end{equation}
\end{lemma}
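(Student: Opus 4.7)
The plan is to compute the tail of $\mathcal{R}$ directly from the explicit form of the Mallows measure, handling the two cases separately and deducing the ``in particular'' statement from the main bound.

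For the case $a > 0$, I would first identify $\eta^{[a,b],k}_\pi$ with the stationary distribution of the closed-interval ASEP on $L := b - a + 1$ sites with a fixed number of empty sites, which by the usual reversibility argument takes the form $\P(\eta) \propto q^{\sum_j y_j}$ where $y_1 < \dotsb < y_{k+1}$ enumerate the empty positions in $[1,L]$. Using the standard identity $\sum_{1 \leq y_1 < \dotsb < y_m \leq n} q^{\sum y_j} = q^{m(m+1)/2} \binom{n}{m}_q$ for Gaussian binomials, the rightmost empty position satisfies
\begin{equation*}
\P(\mathcal{R} = r) = q^{r-k-1} \, \binom{r-1}{k}_q \big/ \binom{L}{k+1}_q, \qquad r \in [k+1,L].
\end{equation*}
Since Gaussian binomial coefficients are polynomials in $q$ with non-negative coefficients and constant term $1$, one has the elementary bounds $\binom{L}{k+1}_q \geq 1$ and $\binom{r-1}{k}_q \leq \prod_{i \geq 1}(1-q^i)^{-1} =: E(q) < \infty$, so $\P(\mathcal{R} = r) \leq E(q) \cdot q^{r-k-1}$. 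Summing over $r \geq k + x$ yields the claimed exponential bound with $c = -\log q$.

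For the case $a = 0$, the Mallows measure now lives on $B_b$ with weight $r^{-l_0(\pi)}q^{-l_1(\pi)}$, and the projection satisfies $\eta^{b,k}_\pi(j) = 0$ exactly when $\pi(j)$ fails to lie in the prescribed particle set (equivalently, when $\pi(j) > -k$ after accounting for sign conventions). A union bound reduces the problem to a marginal estimate of the form $\P(\pi(j) \geq v) \leq C q^{v+j}$ for $v + j \geq 1$, which can be derived either from a sequential sampling characterization of the Mallows measure on $B_b$ -- each successive value being inserted at a position with geometrically decaying conditional distribution -- or via a direct calculation using the normalizing constant $Z_{q,r}$ recorded in the previous subsection. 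Substituting the marginal estimate and summing the geometric series yields the desired bound \eqref{eq:SecondClaimMallows}. The ``in particular'' statement then follows from this main bound by taking $x = 1 - k$: for $k < 0$ the event $\mathcal{R} \geq k + (1-k) = 1$ coincides exactly with $\eta \neq \mathbf{1}$, so $\P(\eta \neq \mathbf{1}) \leq Ce^{-c(1-k)}$ which absorbs into $C' e^{-c|k|}$.

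The main obstacle will be the case $a = 0$: while the $a > 0$ calculation reduces cleanly to a single Gaussian binomial, the hyperoctahedral setting involves both parameters $q$ and $r$ simultaneously, and the marginals of $\pi(j)$ do not admit such a simple closed form. Here one has to lean on the recursive / insertion structure of the Mallows measure on $B_b$ to establish the exponential localization of typical configurations around the longest element $w_0 = (-1, -2, \dotsc, -b)$, which is the source of the concentration driving both the main bound and the all-particle estimate.
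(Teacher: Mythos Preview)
Your proposal is correct and follows the same route as the paper: both cases are handled by reading off the explicit Mallows weights, via the closed-segment ASEP stationary law for $a>0$ and a direct counting/union-bound over positions for $a=0$, with the final clause obtained exactly as you do by specializing $x=1-k$. The only difference is presentational---for $a>0$ the paper simply invokes Proposition~4.2 of \cite{BN:CutoffASEP} while you carry out the underlying Gaussian-binomial computation yourself, and for $a=0$ your marginal-displacement estimate is a slightly more structured version of the paper's one-line ``counting argument for permutations $\pi$ with $\pi(k+i)<0$ for some $i\geq x$''.
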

\begin{proof}
For $a>0$, the configuration  $\eta$ has the law of a stationary asymmetric simple exclusion process on a segment of length $b-a$ with $k$ particles. The claim follows from Proposition 4.2 in \cite{BN:CutoffASEP}. For $a=0$, the claim follows from the definition of $\mathcal{M}_{[0,b]}$ in \eqref{def:MallowsElement} and a counting argument for the number of permutations $\pi$ with $\pi(k+i)<0$ for some $i\geq x$.
\end{proof}

\section{Upper bounds for the limit profile}\label{sec:LimitProfileUpperBound}

We adapt the arguments by Bufetov and Nejjar in \cite{BN:CutoffASEP} to convert the estimates on the current in Theorem \ref{thm:CurrentEstimate} into limit profile bounds for the ASEP  with one open boundary. Recall the set of configurations $\mathcal{A}_N$ from \eqref{def:HittingSet} and $F_\rho$ from \eqref{eq:Main}. In the following, our goal is to show that by time $g_{\rho}(c)$, we have left the set $\mathcal{A}_N$ with probability $1-F_{\rho}(c)$. This is formalized in the next proposition.

\begin{proposition}\label{pro:HittingTheSet}
For all $\rho>0$, and for all $\varepsilon>0$,  
\begin{equation}\label{eq:HittingTheSet}
\liminf_{N \rightarrow \infty}{\P}_{N} \big( \eta_{s} \notin \mathcal{A}_N \text{ for some } s \leq g_{\rho}(c) \, \big| \, \eta_0=\mathbf{0} \big) \geq 1- F_\rho(c) . 
\end{equation}
\end{proposition}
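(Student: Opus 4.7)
The plan is to adapt the Bufetov--Nejjar strategy \cite{BN:CutoffASEP} to the one-boundary setting via the Hecke-algebra description of Section \ref{sec:ASEPHecke}. The ASEP started from $\mathbf{0}$ is realized as the projection of the fully colored ASEP from the identity permutation, so that its law at time $t$ is encoded by the random element $W_t \in \mathcal{H}$. The key goal is to show that, with probability converging to the bound in \eqref{eq:HittingTheSet}, the word $W_t$ admits a factorization $W_t = \mathcal{M}_{[0,K]} \cdot V_t$ for some random remainder $V_t$ and some random $K$ with $N - K = o(\log^{1/16}(N))$. This factorization will exploit the absorption identity $T_i \mathcal{M}_{[0,K]} = \mathcal{M}_{[0,K]}$ for $i \in [0, K-1]$, together with the involution invariance $\iota(W_t) \stackrel{d}{=} W_t$ from Lemma \ref{lem: W_t inv}, which allows us to grow the Mallows factor by absorbing generators from either end of the random product $T_{k_1}\cdots T_{k_m}$ representing $W_t$.

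The event that this factorization succeeds is tied pathwise to how many of the clocks associated to $s_0, s_1, \ldots, s_{K-1}$ have rung by time $t$; via the canonical coupling and Lemma \ref{lem:CurrentComparison}, this event is in turn controlled by the half-space current $\mathcal{J}^{\N}_t$, whose Tracy--Widom or Gaussian limit is given by Theorem \ref{thm:CurrentEstimate}. With $t = g_\rho(c)$, the normalization in the definition of $g_\rho$ is arranged precisely so that the resulting current-fluctuation limit produces the right-hand side of \eqref{eq:HittingTheSet}.

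On the factorization event, $\eta_t$ restricted to $[1, K]$ is distributed as a Mallows projection $\eta^{K,k}_{\pi}$ with $\pi \sim \mathcal{M}_{[0, K]}$ and $k$ negative with $|k| \geq \log^{1/16}(N)$, encoding that enough particles have accumulated. The tail bound \eqref{eq:SecondClaimMallows} of Lemma \ref{lem:MallowsBound} then gives that this restriction equals the all-ones configuration on $[1, K]$ with probability $1 - o(1)$. Together with $N - K = o(\log^{1/16}(N))$ on the factorization event, this forces $\mathcal{R}(\eta_t) < \log^{1/16}(N)$, so $\eta_t \notin \mathcal{A}_N$; the proposition then follows from the monotonicity $\{\eta_t \notin \mathcal{A}_N\} \subseteq \{\eta_s \notin \mathcal{A}_N \text{ for some } s \leq g_\rho(c)\}$.

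The main obstacle is the Hecke-algebra factorization step: pinning down the exact pathwise event on which $W_t$ admits the required decomposition with $K$ as large as $N - o(\log^{1/16}(N))$, and coupling the probability of this event sharply to the half-space current. This requires a careful bookkeeping of Poisson clock rings along the lines of \cite{BN:CutoffASEP}, but the boundary generator $s_0$ of the type BC Hecke algebra introduces new combinatorial features whose contribution is absorbed using the recent half-space current estimates of \cite{H:Boundary}.
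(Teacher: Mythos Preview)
Your proposed mechanism does not work: the factorization $W_t = \mathcal{M}_{[0,K]}\,V_t$ cannot be produced in the way you describe. The element $W_t$ is (after conditioning on the clocks) a single ordered product $T_{k_1}\cdots T_{k_m}$, and such a product never contains $\mathcal{M}_{[0,K]}$ as a left factor. The absorption identity $T_i\mathcal{M}_{[0,K]}=\mathcal{M}_{[0,K]}$ only lets you \emph{eliminate} generators once a Mallows element is already present; it gives no way to synthesize $\mathcal{M}_{[0,K]}$ (a weighted sum over all of $B_{[0,K]}$) from a product of $T_k$'s. Applying $\iota$ merely reverses the product and does not help either. Relatedly, tying the ``factorization event'' to the number of rings of the clocks $s_0,\ldots,s_{K-1}$ cannot give the $N^{1/3}$ or $N^{1/2}$ fluctuation scale: those counts are Poisson and concentrate on scale $N^{1/2}$ around a deterministic mean, independently of $\rho$.

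The paper's argument is structurally different. It never factors $W_t$. Instead it enlarges the segment to $[1,N+S]$ with $S=N^{N}$, and inserts Mallows elements \emph{externally}: it compares the two auxiliary laws $W_t\,\mathcal{M}_{[1,S+N]}\mathcal{M}_{[0,S]}$ (giving $\mathcal{C}_t$) and $\mathcal{M}_{[0,S]}\mathcal{M}_{[1,S+N]}\,W_t$ (giving $\mathcal{D}_t$). The anti-involution $\iota$, together with $\iota(\mathcal{M})=\mathcal{M}$ and $\iota(W_t)\stackrel{d}{=}W_t$, identifies the event $\{\mathcal{C}_t\notin\mathcal{A}_N\}$ with $\{\mathcal{D}_t(x)\neq\infty\text{ for all }x\}$ (Lemma~\ref{lem:ASEPDuality}). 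The latter is estimated directly by the half-space current via Theorem~\ref{thm:CurrentEstimate} (Lemma~\ref{lem:DualEstimate}), using Lemma~\ref{lem:MallowsBound} only to control the effect of the final Mallows multiplications. One then returns from $\mathcal{C}_t$ to the original process $\eta_t$ on $[1,N]$ by the partial-order domination of Lemma~\ref{lem:DominationLeftmostParticle}: since $\mathcal{C}_0\succeq\eta_0=\mathbf{0}$, we have $\mathcal{R}(\eta_t)\leq\mathcal{R}(\mathcal{C}_t)$ almost surely. This is the step your outline is missing.
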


Using Proposition \ref{pro:HittingTheSet}, the upper bound on the limit profile is immediate.
\begin{proof}[ Proof of the upper bound in Theorem \ref{thm:Main}]
We combine Proposition \ref{pro:HittingTheSet}, the upper bound \eqref{eq:UpperBoundHitting}, and the hitting time bound from the extremal configurations in \eqref{eq:HitFromExtreme}. 
\end{proof}

%

\subsection{Preliminaries for the proof of Proposition \ref{pro:HittingTheSet}}
 
To show Proposition \ref{pro:HittingTheSet}, we consider two random configurations $\mathcal{C}_t$ and $\mathcal{D}_t$ derived from the ASEP with one open boundary between time $0$ and $t$. 
We write in the following $\pi(\eta)$ for the signed permutation which yields $\eta \in \Omega_M$ when projecting positive integers to particles and negative numbers to empty sites and has the smallest possible length. For a random permutation $\pi$, we let $\mathcal{H}(\pi)$ denote the corresponding probability distribution as an element in $\mathcal{H}$.
In order to define $\mathcal{C}_t$, fix some $S \in \N$, and define $\mathcal{C}_0 \in \{0,1\}^{S+N}$ as the projection of the random permutation $\pi$ to $B_{N+S}/B_{[1,N+S]}$ (this has the effect of labelling negative numbers as particles and positive numbers as holes) defined via the relation
\begin{equation}
\label{eq: C hecke}
\mathcal{H}(\pi) := \mathcal{M}_{[1,S+N]} \mathcal{M}_{[0,S]} \mathcal{H}(\pi(\mathbf{0})) , 
\end{equation} i.e.\  $\mathcal{C}_0$ is the configuration which we obtain by starting from the empty initial configuration $\mathbf{0}=\{0,\dots\}$, then bringing $[0,S]$ to equilibrium, and then bringing $[1,S+N]$ to equilibrium. Heuristically, this mimics the effect of placing $S$ particles at positions $N+1,\dotsc, N+S$. For the $\mathcal{C}_t$, we consider now the ASEP with one open boundary run at time $t$ when started with the initial configuration $\mathcal{C}_0$. It is the projection of the permutation with distribution $W_t\mathcal{M}_{[1,S+N]}\mathcal{M}_{[0,S]}$.
Similarly, we define a configuration $\mathcal{D}_t \in \{1,2,3,\infty\}^{S+N}$ as follows. We first consider the multi-species extension of the ASEP with one open boundary run at time $t$ and started from the configuration $\xi$ with 
\begin{equation}
\xi(x) = \begin{cases} 3 & \quad \text{if } x \leq \log^{1/16}(N) \\
\infty & \quad \text{if } x > \log^{1/16}(N) . 
\end{cases}
\end{equation} Denote the resulting configuration by $\xi_t$. We then obtain $\mathcal{D}_t$ by first bringing $[1,S+N]$ into equilibrium, and then $[0,N]$ into equilibrium. This has the distribution given by projecting the random permutation obtained by
\begin{equation}
\label{eq: D Hecke}
\mathcal{H}(\pi(\mathcal{D}_t)) = \mathcal{M}_{[0,S]} \mathcal{M}_{[1,S+N]} W_t
\end{equation}
to $B_{N+S}/B_{[1,\log^{1/16}(N)]}\times B_{\log^{1/16}(N)+1,N+S}$ (this has the effect of labeling the numbers in the intervals $[-N-S,-\log^{1/16}(N)-1]$, $[-\log^{1/16}(N),-1]$, $[1,\log^{1/16}(N)]$, and $[\log^{1/16}(N)+1,N+S]$ by labels $1$, $2$, $3$, $\infty$). See Figure \ref{fig:Configruations} for a visualization. 
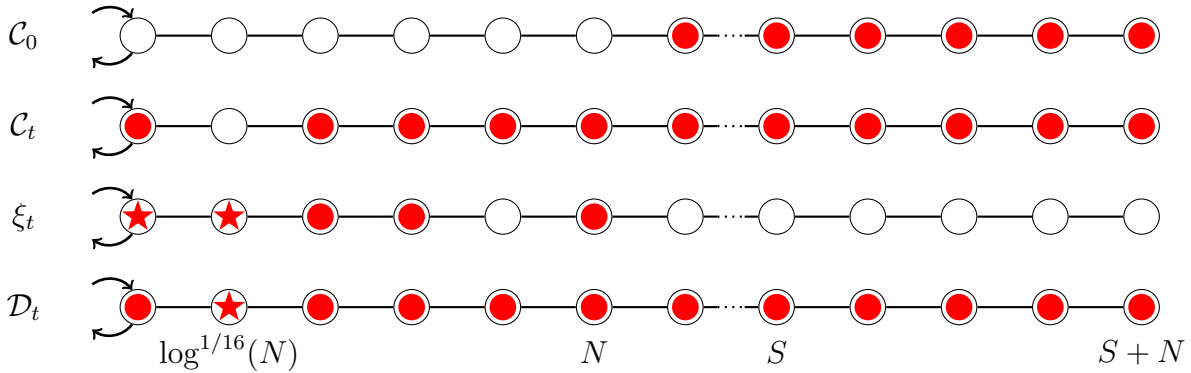
\begin{figure} \label{fig:Configruations}
\centering
\begin{tikzpicture}[scale=0.6]

 \node (text1) at (-2.5,-2){$\mathcal{C}_0$} ;    

	\node[shape=circle,scale=1.2,draw] (A1) at (0,-2){} ;
	\node[shape=circle,scale=1.2,draw] (A2) at (2,-2) {};
	\node[shape=circle,scale=1.2,draw] (A3) at (4,-2){} ;
	\node[shape=circle,scale=1.2,draw] (A4) at (6,-2) {};
	\node[shape=circle,scale=1.2,draw] (A5) at (8,-2){} ;
	\node[shape=circle,scale=1.2,draw] (A6) at (10,-2) {};
	\node[shape=circle,scale=1.2,draw] (A7) at (12,-2){} ;
	\node[shape=circle,scale=1.2,draw] (A8) at (14,-2) {};
	\node[shape=circle,scale=1.2,draw] (A9) at (16,-2){} ;
	\node[shape=circle,scale=1.2,draw] (A10) at (18,-2) {};
	\node[shape=circle,scale=1.2,draw] (A11) at (20,-2){} ;
	\node[shape=circle,scale=1.2,draw] (A12) at (22,-2) {};

	\node[shape=circle,scale=0.9,fill=red] (CA1) at (22,-2) {};
	\node[shape=circle,scale=0.9,fill=red] (CA2) at (20,-2) {};
	\node[shape=circle,scale=0.9,fill=red] (CA3) at (18,-2) {};
	\node[shape=circle,scale=0.9,fill=red] (CA4) at (16,-2) {};
	\node[shape=circle,scale=0.9,fill=red] (CA5) at (14,-2) {};
	\node[shape=circle,scale=0.9,fill=red] (CA6) at (12,-2) {};

	\draw[thick] (A1) to (A2);	
	\draw[thick] (A2) to (A3);	
	\draw[thick] (A3) to (A4);	
	\draw[thick] (A4) to (A5);	
	\draw[thick] (A5) to (A6);	
	\draw[thick] (A6) to (A7);	
	\draw[thick] (A7) to (12.7,-2);
	\draw[thick,dotted] (12.7,-2) to (13.3,-2);		
	\draw[thick] (13.3,-2) to (A8);	
	\draw[thick] (A8) to (A9);	
	\draw[thick] (A9) to (A10);	
	\draw[thick] (A10) to (A11);	
	\draw[thick] (A11) to (A12);		

  \draw [->,line width=1pt] (-1,-1.5) to [bend right,in=135,out=45] (A1);	

    \draw [<-,line width=1pt] (-1,-2.5) to [bend right,in=-135,out=-45] (A1);	
	
 \node (text1) at (-2.5,-4){$\mathcal{C}_t$} ;    

	\node[shape=circle,scale=1.2,draw] (B1) at (0,-4){} ;
	\node[shape=circle,scale=1.2,draw] (B2) at (2,-4) {};
	\node[shape=circle,scale=1.2,draw] (B3) at (4,-4){} ;
	\node[shape=circle,scale=1.2,draw] (B4) at (6,-4) {};
	\node[shape=circle,scale=1.2,draw] (B5) at (8,-4){} ;
	\node[shape=circle,scale=1.2,draw] (B6) at (10,-4) {};
	\node[shape=circle,scale=1.2,draw] (B7) at (12,-4){} ;
	\node[shape=circle,scale=1.2,draw] (B8) at (14,-4) {};
	\node[shape=circle,scale=1.2,draw] (B9) at (16,-4){} ;
	\node[shape=circle,scale=1.2,draw] (B10) at (18,-4) {};
	\node[shape=circle,scale=1.2,draw] (B11) at (20,-4){} ;
	\node[shape=circle,scale=1.2,draw] (B12) at (22,-4) {};

	\node[shape=circle,scale=0.9,fill=red] (CA1) at (22,-4) {};
	\node[shape=circle,scale=0.9,fill=red] (CA2) at (20,-4) {};
	\node[shape=circle,scale=0.9,fill=red] (CA3) at (18,-4) {};
	\node[shape=circle,scale=0.9,fill=red] (CA4) at (16,-4) {};
	\node[shape=circle,scale=0.9,fill=red] (CA5) at (14,-4) {};
	\node[shape=circle,scale=0.9,fill=red] (CA6) at (12,-4) {};
	\node[shape=circle,scale=0.9,fill=red] (CA7) at (10,-4) {};
	\node[shape=circle,scale=0.9,fill=red] (CA8) at (8,-4) {};
	\node[shape=circle,scale=0.9,fill=red] (CA9) at (6,-4) {};
	\node[shape=circle,scale=0.9,fill=red] (CA10) at (4,-4) {};
	\node[shape=circle,scale=0.9,fill=red] (CA11) at (0,-4) {};

	\draw[thick] (B1) to (B2);	
	\draw[thick] (B2) to (B3);	
	\draw[thick] (B3) to (B4);	
	\draw[thick] (B4) to (B5);	
	\draw[thick] (B5) to (B6);	
	\draw[thick] (B6) to (B7);	
	\draw[thick] (B7) to (12.7,-4);
	\draw[thick,dotted] (12.7,-4) to (13.3,-4);		
	\draw[thick] (13.3,-4) to (B8);	
	\draw[thick] (B8) to (B9);	
	\draw[thick] (B9) to (B10);	
	\draw[thick] (B10) to (B11);	
	\draw[thick] (B11) to (B12);		
	
 \node (text1) at (-2.5,-6){$\xi_t$} ;   
 
 \draw [->,line width=1pt] (-1,-3.5) to [bend right,in=135,out=45] (B1);	

    \draw [<-,line width=1pt] (-1,-4.5) to [bend right,in=-135,out=-45] (B1);

	\node[shape=circle,scale=1.2,draw] (C1) at (0,-6){} ;
	\node[shape=circle,scale=1.2,draw] (C2) at (2,-6) {};
	\node[shape=circle,scale=1.2,draw] (C3) at (4,-6){} ;
	\node[shape=circle,scale=1.2,draw] (C4) at (6,-6) {};
	\node[shape=circle,scale=1.2,draw] (C5) at (8,-6){} ;
	\node[shape=circle,scale=1.2,draw] (C6) at (10,-6) {};
	\node[shape=circle,scale=1.2,draw] (C7) at (12,-6){} ;
	\node[shape=circle,scale=1.2,draw] (C8) at (14,-6) {};
	\node[shape=circle,scale=1.2,draw] (C9) at (16,-6){} ;
	\node[shape=circle,scale=1.2,draw] (C10) at (18,-6) {};
	\node[shape=circle,scale=1.2,draw] (C11) at (20,-6){} ;
	\node[shape=circle,scale=1.2,draw] (C12) at (22,-6) {};
	
	\node[shape=star,star points=5,star point ratio=2.5,fill=red,scale=0.45] (YC1) at (0,-6) {};
	\node[shape=star,star points=5,star point ratio=2.5,fill=red,scale=0.45] (YC2) at (2,-6) {};
	\node[shape=circle,scale=0.9,fill=red] (CC) at (4,-6) {};
	\node[shape=circle,scale=0.9,fill=red] (CC) at (6,-6) {};
	\node[shape=circle,scale=0.9,fill=red] (CC) at (10,-6) {};

	\draw[thick] (C1) to (C2);	
	\draw[thick] (C2) to (C3);	
	\draw[thick] (C3) to (C4);	
	\draw[thick] (C4) to (C5);	
	\draw[thick] (C5) to (C6);	
	\draw[thick] (C6) to (C7);	
	\draw[thick] (C7) to (12.7,-6);
	\draw[thick,dotted] (12.7,-6) to (13.3,-6);		
	\draw[thick] (13.3,-6) to (C8);	
	\draw[thick] (C8) to (C9);	
	\draw[thick] (C9) to (C10);	
	\draw[thick] (C10) to (C11);	
	\draw[thick] (C11) to (C12);		

  \draw [->,line width=1pt] (-1,-5.5) to [bend right,in=135,out=45] (C1);	

    \draw [<-,line width=1pt] (-1,-6.5) to [bend right,in=-135,out=-45] (C1);
	
 \node (text1) at (-2.5,-8){$\mathcal{D}_t$} ;    

	\node[shape=circle,scale=1.2,draw] (D1) at (0,-8){} ;
	\node[shape=circle,scale=1.2,draw] (D2) at (2,-8) {};
	\node[shape=circle,scale=1.2,draw] (D3) at (4,-8){} ;
	\node[shape=circle,scale=1.2,draw] (D4) at (6,-8) {};
	\node[shape=circle,scale=1.2,draw] (D5) at (8,-8){} ;
	\node[shape=circle,scale=1.2,draw] (D6) at (10,-8) {};
	\node[shape=circle,scale=1.2,draw] (D7) at (12,-8){} ;
	\node[shape=circle,scale=1.2,draw] (D8) at (14,-8) {};
	\node[shape=circle,scale=1.2,draw] (D9) at (16,-8){} ;
	\node[shape=circle,scale=1.2,draw] (D10) at (18,-8) {};
	\node[shape=circle,scale=1.2,draw] (D11) at (20,-8){} ;
	\node[shape=circle,scale=1.2,draw] (D12) at (22,-8) {};
	
	\node[shape=circle,scale=0.9,fill=red] (CA1) at (22,-8) {};
	\node[shape=circle,scale=0.9,fill=red] (CA2) at (20,-8) {};
	\node[shape=circle,scale=0.9,fill=red] (CA3) at (18,-8) {};
	\node[shape=circle,scale=0.9,fill=red] (CA4) at (16,-8) {};
	\node[shape=circle,scale=0.9,fill=red] (CA5) at (14,-8) {};
	\node[shape=circle,scale=0.9,fill=red] (CA6) at (12,-8) {};
	\node[shape=circle,scale=0.9,fill=red] (CA7) at (10,-8) {};
	\node[shape=circle,scale=0.9,fill=red] (CA8) at (8,-8) {};
	\node[shape=circle,scale=0.9,fill=red] (CA9) at (6,-8) {};
	\node[shape=circle,scale=0.9,fill=red] (CA10) at (4,-8) {};
	\node[shape=star,star points=5,star point ratio=2.5,fill=red,scale=0.45] (YC2) at (2,-8) {};
	\node[shape=circle,scale=0.9,fill=red] (CA11) at (0,-8) {};

	\draw[thick] (D1) to (D2);	
	\draw[thick] (D2) to (D3);	
	\draw[thick] (D3) to (D4);	
	\draw[thick] (D4) to (D5);	
	\draw[thick] (D5) to (D6);	
	\draw[thick] (D6) to (D7);	
	\draw[thick] (D7) to (12.7,-8);
	\draw[thick,dotted] (12.7,-8) to (13.3,-8);		
	\draw[thick] (13.3,-8) to (D8);	
	\draw[thick] (D8) to (D9);	
	\draw[thick] (D9) to (D10);		
	\draw[thick] (D10) to (D11);	
	\draw[thick] (D11) to (D12);		

  \draw [->,line width=1pt] (-1,-7.5) to [bend right,in=135,out=45] (D1);	

    \draw [<-,line width=1pt] (-1,-8.5) to [bend right,in=-135,out=-45] (D1);

 \node (text1) at (2,-9){$\log^{1/16}(N)$} ; 
 \node (text2) at (10,-9){$N$} ; 
 \node (text3) at (14,-9){$S$} ; 
 \node (text4) at (22,-9){$S+N$} ; 
 



	\end{tikzpicture}	
\caption{Visualization of the different configurations involved in the proof of Proposition~\ref{pro:HittingTheSet} for $t=g^N_{\rho}(c)$. First class particles are marked as red dots, second class and third class particles are depicted as stars. }
 \end{figure}
Our key observation is that by the anti-involution property in Lemma \ref{lem:InvolutionHecke} and the projection of Lemma \ref{lem:CouplingMultiSpecies}, we can express the event that $\{\mathcal{C}_t \notin \mathcal{A}_{N} \}$ using the configuration $\mathcal{D}_t$. 
\begin{lemma}\label{lem:ASEPDuality} There exists a coupling such that for all $t \geq 0$, 
\begin{equation}
\{\mathcal{C}_t \notin \mathcal{A}_{N} \} = \{ \mathcal{D}_t(x) \neq \infty \text{ for all } x\in [S+N] \}
\end{equation}
\end{lemma}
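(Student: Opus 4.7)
The plan is to exploit the anti-involution $\iota$ on the Hecke algebra to couple $\mathcal{C}_t$ and $\mathcal{D}_t$ through inverse permutations, and then to translate both events into an identical statement about the one-line notation of this common permutation.

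First, I would apply $\iota$ to the identity \eqref{eq: D Hecke}. Since $\iota$ reverses products, Lemma \ref{lem:InvolutionHecke} gives $\iota(\mathcal{M}_{[a,b]})=\mathcal{M}_{[a,b]}$, and Lemma \ref{lem: W_t inv} gives $\iota(W_t)\overset{d}{=}W_t$, one obtains
\begin{equation*}
\iota(\mathcal{H}(\pi(\mathcal{D}_t))) \;=\; \iota(W_t)\,\mathcal{M}_{[1,S+N]}\,\mathcal{M}_{[0,S]} \;\overset{d}{=}\; W_t\,\mathcal{M}_{[1,S+N]}\,\mathcal{M}_{[0,S]} \;=\; \mathcal{H}(\pi(\mathcal{C}_t)).
\end{equation*}
Because $\iota$ acts on the basis by $T_w \mapsto T_{w^{-1}}$, the left side is the law of the inverse permutation $\pi(\mathcal{D}_t)^{-1}$. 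Hence there is a coupling under which $\pi(\mathcal{C}_t) = \pi(\mathcal{D}_t)^{-1}$ almost surely; a joint coupling across $t$ can be obtained, if needed, by realizing both processes on the same array of Poisson clocks, but distributional equality at each fixed $t$ is all the downstream application of Proposition \ref{pro:HittingTheSet} requires.

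Second, I would rewrite both events as conditions on the one-line notation of this common permutation. In $\mathcal{C}_t$, a site $x \in [S+N]$ carries a particle iff $\pi(\mathcal{C}_t)(x) < 0$, so $\mathcal{C}_t \notin \mathcal{A}_N$, i.e.\ $\mathcal{R}(\mathcal{C}_t) < \log^{1/16}(N)$, means $\pi(\mathcal{C}_t)(x) < 0$ for every integer $x \in (\log^{1/16}(N),\,S+N]$. In $\mathcal{D}_t$, a site $x \in [S+N]$ carries label $\infty$ iff $\pi(\mathcal{D}_t)(x) > \log^{1/16}(N)$, so the event $\{\mathcal{D}_t(x) \neq \infty\text{ for all } x \in [S+N]\}$ states that no value in $(\log^{1/16}(N),\,S+N]$ appears at a positive position, which is equivalent to $\pi(\mathcal{D}_t)^{-1}(y) < 0$ for every integer $y \in (\log^{1/16}(N),\,S+N]$. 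Substituting $\pi(\mathcal{D}_t)^{-1}=\pi(\mathcal{C}_t)$ from the coupling shows the two conditions coincide, hence the events are equal.

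The genuinely new ingredient is the anti-involution step, which converts the \emph{left} multiplication by $W_t$ present in the description of $\mathcal{C}_t$ into a \emph{right} multiplication present in $\mathcal{D}_t$, while fixing each Mallows factor. Without this identity the two configurations live in completely different state spaces (single-species vs.\ four-color) and could not be directly compared. The remainder is a bookkeeping translation between the Hecke side (positive vs.\ negative values in one-line notation) and the particle-system side (particles vs.\ holes, and the specific multispecies labels); the only mild nuisance is that $\log^{1/16}(N)$ need not be an integer, but both projections use the same implicit rounding, so the two threshold conditions match exactly.
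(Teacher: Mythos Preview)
Your argument is correct and matches the paper's proof essentially line for line: both apply the anti-involution $\iota$, invoke Lemmas~\ref{lem: W_t inv} and~\ref{lem:InvolutionHecke} to identify the law of $\pi(\mathcal{C}_t)$ with that of $\pi(\mathcal{D}_t)^{-1}$, and then translate each event into the condition $\pi(j)<0$ for all $j>\log^{1/16}(N)$. Your write-up is simply more explicit about the bookkeeping (and your remark about the rounding of $\log^{1/16}(N)$ is a fair caveat that the paper leaves implicit).
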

\begin{proof} This follows from the anti-involution $\iota$ and Lemmas \ref{lem: W_t inv} and \ref{lem:InvolutionHecke}, as the distribution of the permutation defined by \eqref{eq: C hecke} and the distribution of the inverse of the permutation defined by \eqref{eq: D Hecke} are the same. The event that $\mathcal{C}_t\notin \mathcal{A}_N$ is the projection of the event that $\pi(j)\leq -1$ for all $j\geq \log^{1/16}(N)$. The inverse of this event is that $\pi^{-1}(j)\leq -1$ for all $j\geq \log^{1/16}(N)$, and the projection of this is exactly that $\mathcal{D}_t(x)\neq \infty$ for all $x\in [S+N]$.
\end{proof}

\subsection{From current estimates to limit profiles}

In the remainder, we have to argue that the configuration $\mathcal{C}_0$ is sufficiently close to the step initial condition of the ASEP with one open boundary, and that the probability of the event that $\mathcal{D}_t$ contains no holes can be expressed using the current on the half space ASEP. Both steps are similar to arguments presented in Section 5.7 of \cite{BN:CutoffASEP}. 
We start by estimating the probability that the configuration $\mathcal{D}_t$ under the above construction contains only first, second, and third class particles for a suitable choice of $t$. Recall the coupling of the ASEP with one open boundary to the half space ASEP, and set  $S=N^{N}$.

\begin{lemma}\label{lem:DualEstimate} Recall the function $g_{\rho}(c)$ from \eqref{def:TimeFunction}. Then for all $c \in \R$, and all $\varepsilon>0$, there exists some $N_0(\varepsilon,c)$ such that for all $S,N \geq N_0$ 
\begin{equation}\label{eq:NoHoles}
{\P}(\mathcal{D}_{g_{\rho}(c)}(x) \neq \infty \text{ for all } x\in [S+N]) \geq  {\P}\big(\mathcal{J}_{g_{\rho}(c)}^{\N} \geq N +3\log^{1/16}(N)\big) - \varepsilon
\end{equation} In particular, with the current estimates in Theorem \ref{thm:CurrentEstimate}, we obtain that
\begin{equation}\label{eq:LowerBoundHoles}
\liminf_{N \rightarrow \infty} {\mathbb{P}}(\mathcal{D}_{g_{\rho}(c)}(x) \neq \infty \text{ for all } x\in [S+N]) \geq 1-F_{\rho}(c) , 
\end{equation}  where we recall the function $F_{\rho}(c)$ from \eqref{eq:Main}.
\end{lemma}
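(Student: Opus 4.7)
The plan is to follow the strategy of \cite{BN:CutoffASEP}, Section 5.7, converting the event on $\mathcal{D}_t$ into a statement about the half-space current via the Hecke-algebra framework and the Mallows equilibrations. The two main ingredients are the domination of the current of the ASEP on $[1,S+N]$ by the half-space current $\mathcal{J}^{\N}_t$ (Lemma \ref{lem:CurrentComparison}), and the Mallows concentration estimate of Lemma \ref{lem:MallowsBound}.

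First, I would reduce to the half-space current. Since $S=N^N$ is huge and $t=g_\rho(c)=O(N)$, a standard diffusive estimate shows that in the half-space ASEP started from empty, no particle reaches position $S+N$ by time $t$ except with probability $o(1)$ as $N\to\infty$. Under the canonical coupling, this forces the current of the multi-species ASEP on $[1,S+N]$ (in its ASEP projection) to coincide with $\mathcal{J}^{\N}_t$ with probability at least $1-\varepsilon/3$ for large $N$, so that it suffices to work with the finite-segment current on the right-hand side of \eqref{eq:NoHoles}.

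Next, I would decompose the event $\{\mathcal{D}_t(x)\neq\infty\text{ for all }x\in[S+N]\}$ according to the random signed permutation $\pi(\mathcal{D}_t)$ with distribution $\mathcal{M}_{[0,S]}\mathcal{M}_{[1,S+N]}W_t$. Under the multi-species projection, ``no hole anywhere'' is equivalent to the $S+N-\log^{1/16}(N)$ values in $[\log^{1/16}(N)+1,S+N]$ all being placed at negative positions; equivalently, all first-class labels occupy positive positions. I would then analyze the two Mallows equilibrations in turn: $\mathcal{M}_{[1,S+N]}$ is closed and Mallows-arranges the magnitudes across the $S+N$ positive positions (preserving the sign pattern from $\xi_t$), while $\mathcal{M}_{[0,S]}$ equilibrates the values $\pm[1,S]$, creating an additional boundary-layer contribution. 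Applying Lemma \ref{lem:MallowsBound} to each equilibration yields fluctuations with an $\exp(-c\log^{1/16}(N))\to 0$ tail; absorbing these into the $3\log^{1/16}(N)$ buffer on the right-hand side of \eqref{eq:NoHoles} gives the first bound.

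The ``in particular'' statement \eqref{eq:LowerBoundHoles} then follows from Theorem \ref{thm:CurrentEstimate} at time $t=g_\rho(c)$: the offset $3\log^{1/16}(N)$ is negligible compared to the fluctuation scale ($N^{1/3}$ for $\rho\geq 1/2$ and $N^{1/2}$ for $\rho<1/2$), so $\P(\mathcal{J}^{\N}_{g_\rho(c)}\geq N+3\log^{1/16}(N))\to 1-F_\rho(c)$. The main obstacle is the quantitative Mallows analysis in the previous paragraph: one has to carefully track how the two equilibrations jointly reshape the permutation so that a lower bound on the ASEP current is translated into the ``no hole'' label event, and the $\log^{1/16}(N)$ cushion is tuned precisely to absorb Lemma \ref{lem:MallowsBound}'s exponential fluctuations while remaining asymptotically negligible at the KPZ or Gaussian scale.
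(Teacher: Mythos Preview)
Your proposal is correct and follows essentially the same route as the paper: couple the finite segment to the half-space ASEP using $S=N^N$ so the currents agree up to $o(1)$, then apply the Mallows concentration (Lemma~\ref{lem:MallowsBound}) once for each of the two equilibrations $\mathcal{M}_{[1,S+N]}$ and $\mathcal{M}_{[0,S]}$, and finally invoke Theorem~\ref{thm:CurrentEstimate} for the ``in particular'' clause. One small caution: your reformulation ``equivalently, all first-class labels occupy positive positions'' is not literally equivalent to ``no $\infty$ at any positive position'' (second- and third-class labels can sit at either sign), so when you make the argument precise you should track, as the paper does, the number of first-class particles produced by $\xi_t$ and show via Lemma~\ref{lem:MallowsBound} that after $\mathcal{M}_{[1,S+N]}$ they fill $[S,S+N]$, and then that $\mathcal{M}_{[0,S]}$ removes the remaining holes from $[1,S]$.
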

\begin{proof}
Let us first explain heuristically the idea. By choosing $S$ very large, the finite and infinite ASEP agree with high probability. By waiting until time $g_\rho(c)$, at least $N$ first and second class particles will have entered the system in the multispecies process with probability $1-F_\rho(c)$. The effects of bringing the intervals $[1,S+N]$ and then $[0,S]$ into equilibrium essentially have the effect of moving these $N$ first and second class particles to the rightmost $N$ positions, and then moving all holes in $[1,S]$ out of the system. At the end, we clearly have no holes, and so the probability that $\mathcal{D}_t\neq \infty$ for all $x\in [S+N]$ is at least $1-F_\rho(c)$. We now make this precise.

Recall the construction of the multi-species exclusion process $(\xi_{t})_{t \geq 0}$ from Section~\ref{sec:CanonicalCoupling}, started from $\mathcal{D}_0$, and let $(\xi^{\prime}_{t})_{t \geq 0}$ denote the projection in Lemma \ref{lem:CouplingMultiSpecies} to the ASEP with one open boundary. Since $(\xi_{t})_{t \geq 0}$ contains at most $2\log^{1/16}(N)$ second or third class particles, Lemma \ref{lem:CouplingMultiSpecies} and Lemma \ref{lem:CurrentComparison}, and the fact that as $S=N^{N}$ the current of $(\xi^{\prime}_{t})_{t \geq 0}$ agrees under the canonical coupling with probability tending to $1$ with the current of a half space ASEP until time $t=g_{\rho}(c)$, ensure that
\begin{equation}\label{def:EventA1}
{\P}\Big( \big| x \in [S+N] \, \colon \,  \xi_{g_{\rho}(c)}(x) =1 \big| \geq N + \log^{\frac{1}{16}}(N)\Big) \geq  {\P}\big(\mathcal{J}_{g_{\rho}(c)}^{\N} \geq N +3\log^{\frac{1}{16}}(N)\big) - \frac{\varepsilon}{3}
\end{equation} for any fixed $\varepsilon>0$ when $N$ is sufficiently large. 
 Then conditioning on the event at the left-hand side of \eqref{def:EventA1}, note that by the first statement in Lemma \ref{lem:MallowsBound}, the configuration $\mathcal{D}_0^{\prime}$ obtained by projection of the permutation with distribution
 \begin{equation}
 \mathcal{H}(\pi(\mathcal{D}_0^{\prime})) := \mathcal{M}_{[1,S+N]} W_t ,
 \end{equation}
 satisfies
\begin{equation}
\P\left( \mathcal{D}_0^{\prime}(x)=1  \text{ for all } x \in [S,S+N] \right) \geq  {\P}\big(\mathcal{J}_{g_{\rho}(c)}^{\N} \geq N +3\log^{\frac{1}{16}}(N)\big) - \frac{2\varepsilon}{3}
\end{equation} for all $N$ large enough. 
Using the second statement in Lemma \ref{lem:MallowsBound} for $k=-\log^{1/16}(N)$ to bound the probability after applying the Mallows element $\mathcal{M}_{[0,S]}$, we conclude.
\end{proof}

 We have all tools to show Proposition \ref{pro:HittingTheSet}, and thus to finish the proof of Theorem~\ref{thm:Main}. What remains is to show that the behaviors of the rightmost hole in the original Markov chain on $[N]$ and $\mathcal{C}_t$ on $[N+S]$ agree with high probability.

\begin{proof}[Proof of Proposition \ref{pro:HittingTheSet}]
Note that $\mathcal{C}_0\succeq \eta_0$ as at most $S$ particles will be in $[1,N+S]$, and so by Lemma \ref{lem:DominationLeftmostParticle}, $\P(\mathcal{R}(\eta_t)\leq \mathcal{R}(\mathcal{C}_t)\text{ for all }t \geq 0)=1$. The result immediately follows from Lemma \ref{lem:ASEPDuality} and Lemma \ref{lem:DualEstimate}.
\end{proof}

\bibliography{LimitASEP}{}

\begin{thebibliography}{10}

\bibitem{BBCS:Halfspace}
Jinho Baik, Guillaume Barraquand, Ivan Corwin, and Toufic Suidan.
\newblock {Pfaffian Schur processes and last passage percolation in a
  half-quadrant}.
\newblock {\em Annals of Probability}, 46(6):3015–3089, 2018.

\bibitem{BBCW:HalfspaceASEP}
Guillaume Barraquand, Alexei Borodin, Ivan Corwin, and Michael Wheeler.
\newblock Stochastic six-vertex model in a half-quadrant and half-line open
  asymmetric simple exclusion process.
\newblock {\em Duke Mathematical Journal}, 167(13):2457–2529, 2018.

\bibitem{BBHM:MixingBias}
Itai Benjamini, Noam Berger, Christopher Hoffman, and Elchanan Mossel.
\newblock Mixing times of the biased card shuffling and the asymmetric
  exclusion process.
\newblock {\em Transactions of the American Mathematical Society},
  357(8):3013--3029, 2005.

\bibitem{BB:Coxeter}
Anders Bj\"{o}rner and Francesco Brenti.
\newblock {\em Combinatorics of {C}oxeter groups}, volume 231 of {\em Graduate
  Texts in Mathematics}.
\newblock Springer, New York, 2005.

\bibitem{BB:ColorPosition}
Alexei Borodin and Alexey Bufetov.
\newblock Color-position symmetry in interacting particle systems.
\newblock {\em Annals of Probability}, 49(4):1607 -- 1632, 2021.

\bibitem{B:Hecke}
Alexey Bufetov.
\newblock {Interacting particle systems and random walks on Hecke algebras}.
\newblock {\em preprint, \url{https://arxiv.org/abs/2003.02730}}, 2020.

\bibitem{BGR:OSP}
Alexey Bufetov, Vadim Gorin, and Dan Romik.
\newblock Absorbing time asymptotics in the oriented swap process.
\newblock {\em Ann. Appl. Probab.}, 32(2):753--763, 2022.

\bibitem{BN:CutoffASEP}
Alexey Bufetov and Peter Nejjar.
\newblock {Cutoff profile of ASEP on a segment}.
\newblock {\em Probability Theory and Related Fields}, (183):229--253, 2022.

\bibitem{ES:HighLow}
Dor Elboim and Dominik Schmid.
\newblock {Mixing times and cutoff for the TASEP in the high and low density
  phase}.
\newblock {\em preprint, \url{https://arxiv.org/abs/2208.08306}}, 2022.

\bibitem{GNS:MixingOpen}
Nina Gantert, Evita Nestoridi, and Dominik Schmid.
\newblock Mixing times for the simple exclusion process with open boundaries.
\newblock {\em Annals of Applied Probability}, 33(2):1172–1212, 2023.

\bibitem{H:shift}
Jimmy He.
\newblock Shift invariance of half space integrable models.
\newblock {\em preprint, \url{https://arxiv.org/abs/2205.13029}}, 2022.

\bibitem{H:Boundary}
Jimmy He.
\newblock {Boundary current fluctuations for the half space ASEP and six vertex
  model}.
\newblock {\em preprint, \url{https://arxiv.org/abs/2303.16335}}, 2023.

\bibitem{H:Coxeter}
James~E. Humphreys.
\newblock {\em Reflection groups and {C}oxeter groups}, volume~29 of {\em
  Cambridge Studies in Advanced Mathematics}.
\newblock Cambridge University Press, Cambridge, 1990.

\bibitem{K:Cox}
Jeffrey Kuan.
\newblock Coxeter group actions on interacting particle systems.
\newblock {\em Stochastic Process. Appl.}, 150:397--410, 2022.

\bibitem{LL:CutoffASEP}
Cyril Labbé and Hubert Lacoin.
\newblock Cutoff phenomenon for the asymmetric simple exclusion process and the
  biased card shuffling.
\newblock {\em Annals of Probability}, 47(3):1541--1586, 2019.

\bibitem{LL:CutoffWeakly}
Cyril Labbé and Hubert Lacoin.
\newblock Mixing time and cutoff for the weakly asymmetric simple exclusion
  process.
\newblock {\em Annals of Applied Probability}, 30(4):1847–1883, 2020.

\bibitem{L:CutoffCircle}
Hubert Lacoin.
\newblock The cutoff profile for the simple exclusion process on the circle.
\newblock {\em Annals of Probability}, 44(5):3399–3430, 2016.

\bibitem{L:CutoffSEP}
Hubert Lacoin.
\newblock Mixing time and cutoff for the adjacent transposition shuffle and the
  simple exclusion.
\newblock {\em Annals of Probability}, 44(2):1426--1487, 2016.

\bibitem{L:CycleDiffusiveWindow}
Hubert Lacoin.
\newblock The simple exclusion process on the circle has a diffusive cutoff
  window.
\newblock {\em Annales de l'Institut Henri Poincaré Probabilités et
  Statistiques}, 53(3):1402–1437, 2017.

\bibitem{LPW:markov-mixing}
David~A. Levin, Yuval Peres, and Elizabeth~L. Wilmer.
\newblock {\em {Markov Chains and Mixing Times}}.
\newblock American Mathematical Society, Providence, RI, USA, second edition,
  2017.

\bibitem{M:Coxeter}
I.~G. Macdonald.
\newblock {The Poincaré series of a Coxeter group}.
\newblock {\em Mathematische Annalen}, 199:161–174, 1972.

\bibitem{N:LimitComparison}
Evita Nestoridi.
\newblock The limit profile of star transpositions.
\newblock {\em preprint, \url{https://arxiv.org/abs/2111.03622}}, 2021.

\bibitem{NO:LimitInterchange}
Evita Nestoridi and Sam Olesker-Taylor.
\newblock Limit profile for projections of random walks on groups.
\newblock {\em preprint, \url{https://arxiv.org/abs/2209.12859}}, 2022.

\bibitem{NO:LimitReversible}
Evita Nestoridi and Sam Olesker-Taylor.
\newblock {Limit profiles for reversible Markov chains}.
\newblock {\em Probability Theory and Related Fields}, (182):157--188, 2022.

\bibitem{S:CutoffExclusion}
Justin Salez.
\newblock Universality of cutoff for exclusion with reservoirs.
\newblock {\em Annals of Probability}, 51:478--494, 2022.

\bibitem{S:MixingTASEP}
Dominik Schmid.
\newblock {Mixing times for the TASEP in the maximal current phase}.
\newblock {\em Annals of Probability}, 51(4):1342--1379, 2023.

\bibitem{SS:TASEPcircle}
Dominik Schmid and Allan Sly.
\newblock {Mixing times for the TASEP on the circle}.
\newblock {\em preprint, \url{https://arxiv.org/abs/2203.11896}}, 2022.

\bibitem{T:LimitProfile}
Lucas Teyssier.
\newblock Limit profile for random transpositions.
\newblock {\em Annals of Probability}, 48(5):2323--2343, 2020.

\bibitem{T:SSEP}
Hong-Quan Tran.
\newblock {Cutoff for the non reversible SSEP with reservoirs}.
\newblock {\em preprint, \url{https://arxiv.org/abs/2211.14687}}, 2022.

\bibitem{Z:Metropolis}
Lingfu Zhang.
\newblock {Cutoff profile of the Metropolis biased card shuffling}.
\newblock {\em preprint, \url{https://arxiv.org/abs/2208.13383}}, 2022.

\end{thebibliography}
\bibliographystyle{plain}

\textbf{Acknowledgment.} We thank the \textit{Mathematisches Forschungsinstitut Oberwolfach} for the seminar \textit{The Cutoff Phenomenon for Finite Markov Chains}, where this work was initiated. DS acknowledges the DAAD PRIME program for financial support.

\end{document}